\newtheorem{pro}{Proposition}[section]
\newtheorem{recursion}[pro]{Recursion}
\newtheorem{lem}[pro]{Lemma}
\newtheorem{theo}[pro]{Theorem}
\newtheorem{defi}[pro]{Definition}
\newtheorem{cor}[pro]{Corollary}
\newtheorem{remk}[pro]{Remark}
\newtheorem{assu}[pro]{Assumption}
\newcommand{\ep}{\varepsilon}
\newcommand{\om}{\omega}
\newcommand{\vp}{\varphi}
\newcommand{\la}{\lambda}
\newcommand{\lra}{\longrightarrow}
\newcommand{\lmt}{\longmapsto}
\newcommand{\nrm}[1]{\mbox{ $ \displaystyle \left\| {#1} \right\| $} }
\newcommand{\nri}[1]{\mbox{ $ \nrm{ {#1} }_{\infty} $} }
\newcommand{\nrE}[1]{\mbox{ $ \nrm{ {#1} }_{E} $} }
\newcommand{\fk}[1]{ \left( {#1} \right) }
\newcommand{\bk}[1]{ \left\{ {#1} \right\} }
\newcommand{\btr}[1]{\mbox{ $ \left| {#1} \right| $ }}
\newcommand{\re}{{\bf\Bbb R}}
\newcommand{\rep}{{\bf\Bbb R^+}}
\newcommand{\za}{{\bf\Bbb N}}
\newcommand{\jz}{{\bf\Bbb J}}
\newcommand{\ji}{{\bf\Bbb I}}
\newcommand{\intO}{\int_{0}^{\infty}}
\newcommand{\nres}[1]{e^{-\frac{#1}{\lambda}}}
\newcommand{\Funk}[5]{ \begin{array}{ccccc}
                       {#1} & : & {#2} & \lra & {#3} \\
                            &   & {#4} & \lmt & \displaystyle{#5} 
                       \end{array}                       }
\newcommand{\Jl}{J_{\lambda}}
\newcommand{\Jlw}{J_{\lambda}^{\om}}
\newcommand{\ul}{u_{\lambda}}
\newcommand{\up}[2]{u_{#1,#2}}
\newcommand{\hw}{h^{\om}}
\newcommand{\dtl}{\fk{\partial_t}_{\la}}
\newcommand{\Tlp}{T_{\la}\psi}
\newcommand{\Tlpn}{T_{\la}^n\psi}
\begin{document}

\title{Whole Line Solutions to Abstract Functional Differential Equations}
\author{Josef Kreulich\ Universit\"at Duisburg/Essen\\}
\date{}
\begin{abstract}
In the underlying study it is shown how the linear method of the Yosida-approximation of the derivative applies to solve possibly nonlinear and multivalued functional differential equations like:
\begin{eqnarray*}
u^{\prime}(t) &\in& A(t,u_t)u(t) +\om u(t), \ t \in \re
\end{eqnarray*}
Furthermore, in the case of finite and infinite delay we give an answer about  whether the solution is bounded, periodic, almost periodic, or some kind of almost automorphy.
\end{abstract}

\maketitle

\section{Introduction}
In the following we are going to prove the existence and stability for nonlinear functional differential equations. For $r>0$ let
\begin{eqnarray*}
\ji &\in & \bk{(-\infty,0], [-r,0]},\\
E&\in& \bk{BUC(\re^-,X),C([-r,0],X)},\\
Y&=& BUC(\re,X).
\end{eqnarray*}
For $u\in Y$ we define $u_t:=\bk{\ji\ni s\mapsto u(t+s)}\in E.$ Based on the previous definition we consider the functional differential equation:

\begin{equation} \label{gFDE_re}
\begin{array}{rcl} 
u^{\prime}(t) &\in & A(t,u_t)u(t)+ \om u(t): \ t \in \re. \\
\end{array}
\end{equation}

This setting extends the one by Kartsatos \cite{bd-Kartsatos}, who proved the existence of a bounded solution on $\re$ in the case of infinite delay, $A(t,\vp)$ single-valued, and $D(A(t,\vp))=D.$ Litcanu \cite{litcanu} proved the existence of a periodic solution in the case of infinite delay as well. In these studies, it is also shown that the generalized solutions under certain conditions are in fact strong solutions in case $X$ is reflexive, and $\vp $ Lipschitz with $\vp(0)$ in the generalized domain of the operators $A(t,\cdot).$ In the present study we show how the completely different method of Yosida approximation of the derivative applies to obtain existence of strong and mild solutions. In the general Banach space case, and in view of the corresponding non-autonomous Cauchy problem,
\begin{equation} \label{first-Cauchy-Eq}
\begin{array}{rcl} 
u^{\prime}(t) &\in & B(t)u(t)+ \om u(t): \ t \in \re, \\
\end{array}
\end{equation}
it s shown that the found solution is an integral solution to (\ref{first-Cauchy-Eq}), when $B(t)=A(t,u_t),$ which serves for general regularity results in Banach spaces with the Radon-Nikodym-Property (RNP). 
Moreover, $A(t,\vp)$ may be multivalued and $D (A(t,\vp ) )$ may depend on $t\in\re.$ 
Additionally, we show how the method applies to provide the existence of bounded, periodic, anti-periodic, almost periodic and almost automorphic solutions in the finite and infinte delay case. The applied method is taken from the study \cite{Kreulichevo}. 

\section{Main Assumptions} \label{Abstract-FDEs}

The main assumptions to solve the problem (\ref{gFDE_re}) on the operator $A$ are:
\begin{assu} \label{IVFDE_1_re}
The family $\bk{A(t,\vp):t \in \re, \vp \in E}$ are m-dissipative operators.
\end{assu}
\begin{assu}\label{IVFDE_2_og_re}
There exist bounded and uniformly continuous functions $h,k:\re \to X,$
 a constant $K_0 >0,$ and $L_1,L_2: \rep \lra \rep$ continuous and monotone non decreasing,
such that for $\la >0$ and $t_1,t_2\in \re $ we have
\begin{eqnarray*}
\lefteqn{\nrm{x_1-x_2}} \\
&\le& \nrm{x_1-x_2 -\la(y_1-y_2)}+\la \nrm{h(t_1)-h(t_2)}L_1(\nrm{x_2}) \\
&&+ \la\nrm{k(t_1)-k(t_2)}L_2(\nrm{\vp_2})+K_0\la\nrE{\vp_1-\vp_2}
\end{eqnarray*}
for all $[x_i,y_i] \in A(t_i,\vp_i),$  $i=1,2, \ \vp_i\in E.$
\end{assu}

\begin{assu} \label{IVFDE_2_re}
There exist bounded and Lipschitz continuous functions $g,h,k:\re \to X, $ a constant $K_0 >0,$ and $L_1,L_2: \rep \lra \rep$ continuous and monotone non decreasing, 
such that for $\la >0$ and $t_1,t_2\in \re$ we have
\begin{eqnarray*}
\lefteqn{\nrm{x_1-x_2}} \\
&\le& \nrm{x_1-x_2 -\la(y_1-y_2)}+\la \nrm{h(t_1)-h(t_2)}L_1(\nrm{x_2}) \\
&&+ \la \nrm{g(t_1)-g(t_2)}\nrm{y_2}+\la \nrm{k(t_1)-k(t_2)}L_2(\nrm{\vp_2})+K_0\la\nrE{\vp_1-\vp_2}
\end{eqnarray*}
for all $[x_i,y_i] \in A(t_i,\vp_i),$  $i=1,2, \ \vp_i\in E.$
\end{assu}
Throughout this study the Lipschitz constants for $g, h, k$ will be denoted by $L_g, L_h, L_k.$ The assumptions above are extensions to the assumptions given for the Cauchy-problem (\ref{first-Cauchy-Eq}). Setting $K_0=0,$ and $k=0$ we obtain the assumptions given in \cite{Kreulichevo}.

\begin{remk} \label{D-vp_eq_D}
We have 
$$ \hat{D_{\vp_1}} = \hat{D_{\vp_2}} \mbox { } \forall \vp_1,\vp_2\in E.$$ 
Moreover, if $x\in \hat{D}_{\vp_0}$ for some $\vp_0\in E$ and $F\subset E$ is bounded we find some $K>0,$ such that 
$$
\sup_{t\in\re, \vp\in F} \btr{A(t,\vp)x} \le K.
$$
In view of the previous observations we define $\hat{D}=\hat{D}_{\vp}.$ Consequently we have $\overline{D}_{\vp}=\overline{\hat{D}}=:\overline{D}.$
\end{remk}
\begin{proof}[Proof of Remark \ref{D-vp_eq_D}]
Due to Assumption \ref{IVFDE_2_re} we have for given $\vp_1,\vp_2 \in E$, $t,s \in \re,$ and $ x\in \hat{D}(A(t,\vp_1)$ 
\begin{eqnarray*}
\lefteqn{\btr{A(s,\vp_2)x}}\\
&\le \btr{A(t,\vp_1)x}+\nrm{h(s)-h(t)}L(\nrm{x})+\nrm{g(s)-g(t)}\btr{A(t,\vp_1)x}+K_0\nrm{\vp_1-\vp_2}_E.
\end{eqnarray*}
A similar inequality comes with the Assumption \ref{IVFDE_2_og_re}.
\end{proof}

As we consider $A(t,u_t)+\om I$ we need the perturbed control inequality  of Assumption       
\ref{IVFDE_2_og_re} and  \ref{IVFDE_2_re}. This is computed similar to \cite[pp. 1056-1057]{Kreulichevo} and leads with
$$\Funk{h^{\om}}{\jz}{\fk{ X\times X,\nrm{\cdot}_1}}{t}{(h(t),\btr{\om}g(t))}
$$ and $L_1^{\om}=L(t)+t$ in case Assumption \ref{IVFDE_2_re} to the modified inequality: 

\begin{eqnarray} \label{perturbed_control}
\lefteqn{\nrm{x_1-x_2}} \nonumber  \\
&\le& \nrm{x_1-x_2 -\la(y_1-y_2)}+\la \nrm{h^{\om}(t_1)-h^{\om}(t_2)}L_1^{\om}(\nrm{x_2}) \\
&&+ \la \nrm{g(t_1)-g(t_2)}\nrm{y_2}+ \nrm{k(t_1)-k(t_2)}L_2(\nrm{\vp_2}) \nonumber\\
&&+K_0\la\nrE{\vp_1-\vp_2} +\om\nrm{x_1-x_2}, \nonumber
\end{eqnarray}
and in the case of Assumption \ref{IVFDE_2_og_re} to
\begin{eqnarray} \label{perturbed_control_og}
\lefteqn{\nrm{x_1-x_2}} \nonumber  \\
&\le& \nrm{x_1-x_2 -\la(y_1-y_2)}+\la \nrm{h^{\om}(t_1)-h^{\om}(t_2)}L_1^{\om}(\nrm{x_2}) \\
&&+ \nrm{k(t_1)-k(t_2)}L_2(\nrm{\vp_2})+K_0\la\nrE{\vp_1-\vp_2} +\om\nrm{x_1-x_2}. \nonumber
\end{eqnarray}
Throughout this study we define for $t\in\re,$ $x\in X$ and $\vp\in E$,
\begin{eqnarray*}
J_{\la}^{\om}(t,\vp)x&:=&\fk{I-\la (A(t,\vp)+\om I)}^{-1}x\\
&=&\fk{I-\frac{\la}{1-\la\om}A(t,\vp)}^{-1}\fk{\frac{1}{1-\la\om}x} \\
&=:&J_{\frac{\la}{1-\la\om}}(t,\vp)\fk{\frac{1}{1-\la\om}x},
\end{eqnarray*}
for all $0<\la<\frac{1}{\om}.$

\section{Main Results}

To identify the found solution in a more general setting we define integral solutions similar to \cite{Kreulichevo}
\begin{equation} \label{whole-Cauchy-problem}
v^{\prime}(t)\in B(t)v(t)+\om v(t), \ t\in\re. 
\end{equation}
Therefore we view $B(t)$ as an operator independent of $\vp\in E$, i.e. $K_0=0$ and $k=0$ in the Assumption \ref{IVFDE_2_og_re}, or Assumption \ref{IVFDE_2_re}.

\begin{defi}
Assume that either the Assumption \ref{IVFDE_2_og_re} with  or Assumption \ref{IVFDE_2_re}  is satisfied for the family $\bk{B(t):t\in I}$ with $K_0=0$ and $k=0.$
A continuous function 
$u:\re \to X$ is called an integral solution on $\re$ to (\ref{whole-Cauchy-problem}), if
\begin{eqnarray*} \label{line-integral-sol-ineq}
\nrm{u(t)-x}-\nrm{u(r)-x}&\le &\int_r^t\fk{[y,u(\nu)-x]_{+}+\om\nrm{u(\nu)-x}}d\nu \\
&&+L^{\om}_1(\nrm{x})\int_r^t\nrm{\hw(\nu)-\hw(r)}d\nu +\nrm{y}\int_r^t\nrm{g(\nu)-g(r)}d\nu
\end{eqnarray*}
for all $-\infty < r\le t <\infty ,$ and $[x,y]\in B(r)+\om I.$
In the case of Assumption \ref{IVFDE_2_og_re} we assume $(g=0).$
\end{defi}

The proof of existence is split into two main steps, the initialization  of a recursion $n=1$  its step from $n\to n+1,$ for every small $\la,$ and finally the computation of the double limit $\lim_{n\to\infty}\lim_{\la\to 0}.$ To approximate the solution the method provided in \cite{Kreulichevo} is used.
 We start with defining the Yosida approximation of the derivative on the whole line.
 
 $$
(\partial_t)_{\la}u(t):=\frac{1}{\la}\fk{u(t)-\frac{1}{\la}\int_0^{\infty}\nres{\tau}u(t-\tau)d\tau}.
$$

By the above definition we are able to define the recursion for the approximations.

\begin{recursion} \label{recursion}
Let $\psi\in Y.$ 
\begin{description}
\item{$n=1$)} $u_{1,\la}$ is the solution to
\begin{equation} \label{recursion_start}
\begin{array}{rcll} 
\fk{\partial_t}_{\la} u_{1,\la}(t) &\in & A(t,(\psi)_t)u_{1,\la}(t) +\om u_{1,\la}(t) &:t\in \re
\end{array}
\end{equation}
\item{$n\to n+1$)} If $\bk{u_{n,\la}}_{\la>0} \subset Y$ is the solution to the n-th equation we define
$u_{n+1,\la}$ to be the solution to:
\begin{equation} \label{recursion_step} 
\begin{array}{rcll} 
\fk{\partial_t}_{\la} u_{n+1,\la}(t) &\in & A(t,(u_{n,\la})_t)u_{n+1,\la}(t) +\om u_{n+1,\la}(t)&:t\in \re
\end{array}
\end{equation}
\end{description}
\end{recursion}

We are ready to state the main result of this section on the existence of a solution to (\ref{gFDE_re}) for the finite and infinite delay case.
As we found a sequence of functions $\bk{u_n}_{n\in\za}$ it remains to prove their uniform convergence and the independence of the starting point $\psi$ of the recursion.

\begin{theo} \label{halfline-uniform-convergent}
Let Assumption \ref{IVFDE_1_re} and either Assumption \ref{IVFDE_2_og_re} with $K_0<-\om,$ or Assumption \ref{IVFDE_2_re} with $\max\bk{K_0,L_g}<-\om$ hold. Further, let $\psi\in Y.$ 
The double sequence defined in the Recursion \ref{recursion} is uniformly convergent on $\re,$ the limit is independent of the starting point, and is an integral solution to the problem (\ref{whole-Cauchy-problem}) with $B(t)=A(t,u_t)$ with adequate control functions $h_B,L_B,$ and $g.$
We call this limit the solution to (\ref{gFDE_re}) on $\re.$
\end{theo}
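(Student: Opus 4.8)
The plan is to read the Recursion as a fixed-point iteration at the level of the sequence $\bk{u_n}_{n\in\za}$, where $u_n:=\lim_{\la\to0}u_{n,\la}$, and to feed into it the one-step facts already available for the non-autonomous Cauchy problem from \cite{Kreulichevo}.

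\textbf{Step 1: each line of the recursion is solvable and its $\la\to0$ limit exists.} Fix $n$ and set $u_{0,\la}:=\psi$. As a function of $t\in\re$, the frozen operator $B_n(t):=A(t,(u_{n-1,\la})_t)$ is a family of $m$-dissipative operators satisfying the perturbed control inequality (\ref{perturbed_control}) or (\ref{perturbed_control_og}) with control data $\hw$, $L_1^\om$, $g$ that \emph{do not depend on $n$ or $\la$}: for $t_1=t_2$ the $K_0$- and $k$-terms of Assumption \ref{IVFDE_2_re}/\ref{IVFDE_2_og_re} drop out, and for $t_1\ne t_2$ only the data already present in (\ref{perturbed_control}) remain. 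Hence the whole-line construction of \cite{Kreulichevo} for (\ref{first-Cauchy-Eq}) applies to each of (\ref{recursion_start}), (\ref{recursion_step}): there is a unique $u_{n,\la}\in Y$, and $u_{n,\la}\to u_n$ in $Y$ as $\la\to0$, with $u_n$ an integral solution to (\ref{whole-Cauchy-problem}) for $B=B_n$, all quantitative bounds ($\nri{u_{n,\la}}$ and the a priori bound on $\nrm{y}$ for $[x,y]\in B_n(t)x$ furnished by Remark \ref{D-vp_eq_D}) being uniform in $n$ and $\la$. This is where the hypothesis $L_g<-\om$ (in the setting of Assumption \ref{IVFDE_2_re}) is consumed, since $g$ multiplies the a priori unbounded quantity $\nrm{y}$.

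\textbf{Step 2: the recursion contracts.} Subtracting (\ref{recursion_step}) for $u_{n+1,\la}$ and $u_{n,\la}$, the two frozen operators share the same time $t$, so applying the control inequality with $t_1=t_2=t$, $\vp_1=(u_{n,\la})_t$, $\vp_2=(u_{n-1,\la})_t$, only the term $K_0\la\nrE{(u_{n,\la})_t-(u_{n-1,\la})_t}$ survives besides the $\om$-dissipativity gain. Combined with the accretivity estimate for the Yosida-derivative operator from \cite{Kreulichevo} and the elementary bound $\nrE{(u_{n,\la})_t-(u_{n-1,\la})_t}\le\nri{u_{n,\la}-u_{n-1,\la}}$ valid for every $t$, this yields
$$
\nri{u_{n+1,\la}-u_{n,\la}}\ \le\ \theta_\la\,\nri{u_{n,\la}-u_{n-1,\la}},\qquad \theta_\la\to\frac{K_0}{-\om}=:\theta<1\ \ (\la\to0).
$$
Letting $\la\to0$ gives $\nri{u_{n+1}-u_n}\le\theta\,\nri{u_n-u_{n-1}}$; the same bound follows directly by a comparison estimate between the integral solutions $u_n$, $u_{n-1}$, since a bounded solution of $\phi'\le\om\phi+K_0\nri{u_n-u_{n-1}}$ on all of $\re$ satisfies $\phi\le K_0\nri{u_n-u_{n-1}}/(-\om)$.

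\textbf{Step 3: conclusion.} The geometric estimate makes $\bk{u_n}$ a Cauchy sequence in the Banach space $Y=\bucrx$, so $u_n\to u$ uniformly on $\re$ for some $u\in Y$. Running Step 2 from two starting functions $\psi,\tilde\psi\in Y$ gives $\nri{u_n-\tilde u_n}\le\theta^{\,n-1}\nri{u_1-\tilde u_1}\to0$, which proves independence of $\psi$. Finally, each $u_n$ satisfies the defining inequality of the integral solution with the operator $A(t,(u_{n-1})_t)$, and $(u_{n-1})_t\to u_t$ in $E$ uniformly in $t$; passing to the limit $n\to\infty$ in that inequality — using the $K_0\nrE{\cdot}$-term to absorb the change of operator and the upper semicontinuity of $[\cdot,\cdot]_+$ together with the uniform convergence to pass to the limit under $\int_r^t$ — identifies $u$ as an integral solution to (\ref{whole-Cauchy-problem}) with $B(t)=A(t,u_t)$ and control functions $h_B=\hw$, $L_B=L_1^\om$, $g$. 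The step I expect to be the main obstacle is Step 2: one must extract a contraction constant genuinely $<1$ \emph{uniformly in $\la$} by correctly balancing the $\om$-dissipativity gain against the $K_0\la$ loss inside the Yosida-derivative accretivity estimate and then check it survives $\la\to0$; the bound $\nrE{(u_{n,\la})_t-(u_{n-1,\la})_t}\le\nri{u_{n,\la}-u_{n-1,\la}}$ is what renders the delay harmless, but one also needs the $n,\la$-uniform a priori bound on $\nrm{y}$ from Remark \ref{D-vp_eq_D} to keep the $L_g$-contribution under control in the Assumption \ref{IVFDE_2_re} case.
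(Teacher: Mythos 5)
Your proposal follows essentially the same route as the paper: each recursion line is treated as a frozen non-autonomous Cauchy problem solved via the machinery of \cite{Kreulichevo}, the delay term yields a contraction in $n$ with ratio tending to $\frac{K_0}{-\om}<1$ (this is exactly the paper's Lemma \ref{ul-fixpoint} and inequality (\ref{iterate_ineq_2}), summed over the convolution kernel), and the integral-solution property is obtained by passing to the limit $n\to\infty$ in the inequality satisfied by each $u_n$. The one point you gloss over is that the frozen history $(u_{n-1,\la})_t$ itself depends on $\la$, so the existence of the limit $u_{n,\la}\to u_n$ in your Step 1 requires the additional comparison of Lemma \ref{induction_step_1} together with the density/mollifier argument of Corollary \ref{induction_step_2_halfline} — but both follow from the same contraction estimate you already isolate in Step 2, so the approach is the paper's.
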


As a direct consequence we obtain a bounded solution similar to \cite{bd-Kartsatos} in the infinite delay case, and as well in the case of finite delay.
\begin{theo}\label{theo-bd-sol}
If Assumption \ref{IVFDE_1_re} and either Assumption \ref{IVFDE_2_og_re} with $K_0<-\om,$ or Assumption \ref{IVFDE_2_re} with $\max\bk{K_0,L_g}<-\om$ hold, then there exist a bounded and uniformly continuous solution of (\ref{gFDE_re}). 
\end{theo}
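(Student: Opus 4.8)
The plan is to obtain Theorem \ref{theo-bd-sol} as an immediate corollary of Theorem \ref{halfline-uniform-convergent}, the hypotheses of the two statements being identical. First I would invoke Theorem \ref{halfline-uniform-convergent} with an arbitrary starting function $\psi\in Y$: it gives that the double sequence $\bk{u_{n,\la}}$ of Recursion \ref{recursion} converges uniformly on $\re$ --- first in $\la\to 0$ to functions $u_n$, then in $n\to\infty$ to a function $u$ that does not depend on $\psi$ --- and that $u$ is an integral solution of (\ref{whole-Cauchy-problem}) with $B(t)=A(t,u_t)$, hence, by the last line of that theorem, $u$ is \emph{the} solution of (\ref{gFDE_re}) on $\re$. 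Thus the existence part is already done, and what remains is only to justify the two regularity adjectives, ``bounded'' and ``uniformly continuous''.

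For those I would use that $Y=BUC(\re,X)$, with the supremum norm, is a Banach space, together with the fact that the convergence furnished by Theorem \ref{halfline-uniform-convergent} is uniform on all of $\re$, i.e. convergence in the norm of $Y$. By construction each approximant $u_{n,\la}$ from Recursion \ref{recursion} lies in $Y$; passing to the inner limit $\la\to 0$ stays inside $Y$ (a uniform limit of bounded maps $\re\to X$ is bounded, and a uniform limit of uniformly continuous maps is uniformly continuous by the standard $\ep/3$ estimate), so $u_n\in Y$, and then the outer limit $n\to\infty$ likewise yields $u\in Y$. Hence $u$ is bounded and uniformly continuous, which is the assertion; I would spell this out in a line or two.

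The main obstacle is not located in the present statement at all: it is entirely deferred to Theorem \ref{halfline-uniform-convergent}, whose proof has to (i) produce, for each small $\la$, a solution in $Y$ of the Yosida-regularised whole-line problems (\ref{recursion_start}) and (\ref{recursion_step}); (ii) pass to the limit $\la\to 0$; and (iii) contract the recursion in $n$ using the smallness assumption $K_0<-\om$ (respectively $\max\bk{K_0,L_g}<-\om$), so that the resulting double limit is uniform on $\re$. Once Theorem \ref{halfline-uniform-convergent} is in hand, no new estimate is needed here; the only point requiring a touch of care is to note explicitly that the limit inherits boundedness and uniform continuity precisely because that convergence is globally uniform on $\re$ rather than merely locally uniform.
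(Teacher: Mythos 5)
Your proposal is correct and follows essentially the same route as the paper: the paper's proof likewise observes that the fixpoint mapping of Proposition \ref{T-la-defined} leaves $Y=BUC(\re,X)$ invariant, so the solution from Theorem \ref{halfline-uniform-convergent} arises as a uniform limit of functions in $Y$ and therefore lies in $Y$. You merely spell out the closedness of $BUC(\re,X)$ under uniform limits a bit more explicitly, which is fine.
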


In order to find strong solutions, the notion of Lipschitz continuity comes into play, for which we provide the following theorem.

\begin{theo} \label{theo-lipschitz-continuity}
If $A$ satisfy either Assumption \ref{IVFDE_2_og_re} with $h,k$ Lipschitz and $K_0<-\om,$ or Assumption \ref{IVFDE_2_re} with $L_g$ the Lipschitz constant $L_g+K_0<-\om,$  then the solution $u$ found in Theorem \ref{halfline-uniform-convergent} is globally Lipschitz continuous. 
\end{theo}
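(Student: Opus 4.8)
The plan is to track the Lipschitz estimate through the whole construction: first through a single Yosida approximant $u_{n,\la}$, then through the recursion $n\to n+1$ uniformly in $n$, and finally through the double limit $\lim_{n\to\infty}\lim_{\la\to 0}$, using that a uniform (in $n$ and $\la$) Lipschitz bound passes to the limit. The key point is that the time-dependence of the operators enters only through the control functions $h^{\om},g,k$ (and through the delay term $\vp_1-\vp_2$), all of which are Lipschitz (the data $g,h,k$ by hypothesis, and $t\mapsto (\psi)_t$, respectively $t\mapsto (u_{n,\la})_t$, are Lipschitz into $E$ once $u_{n,\la}$ is). So a comparison of $u(\cdot)$ with its shift $u(\cdot+\sigma)$ should close, provided the Lipschitz constants of the control functions, amplified by the $E$-delay term with factor $K_0$ (and, under Assumption \ref{IVFDE_2_re}, by $L_g$), are dominated by $-\om$; this is exactly the sign condition $K_0<-\om$, resp. $L_g+K_0<-\om$.

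Concretely, I would proceed as follows. First, fix $\sigma\in\re$ and compare $u_{1,\la}(t)$ with $u_{1,\la}(t+\sigma)$. Since $(\partial_t)_\la$ commutes with translation, $u_{1,\la}(\cdot+\sigma)$ solves the same type of equation with $A(t,(\psi)_t)$ replaced by $A(t+\sigma,(\psi)_{t+\sigma})$; applying the perturbed control inequality (\ref{perturbed_control}) or (\ref{perturbed_control_og}) to the resolvent identity defining the approximant, and using $\nrm{h^\om(t+\sigma)-h^\om(t)}\le L_{h^\om}\btr{\sigma}$, $\nrm{g(t+\sigma)-g(t)}\le L_g\btr{\sigma}$, $\nrm{(\psi)_{t+\sigma}-(\psi)_t}_E\le L_\psi\btr{\sigma}$, one gets an inequality of the form $\nrm{u_{1,\la}(t)-u_{1,\la}(t+\sigma)}\le$ (nonexpansive-in-the-resolvent part) $+\,C\btr{\sigma}\la + K_0\la\,\nrm{u_{1,\la}(\cdot)-u_{1,\la}(\cdot+\sigma)}_{\text{sup}} + \om\la\,(\dots)$. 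Taking the supremum over $t$ and absorbing the $\la$-contractive error terms — precisely the step where $K_0+\om<0$ (resp. $L_g+K_0+\om<0$) is needed so that a Gronwall/fixed-point absorption works uniformly in small $\la$ — yields $\mathrm{Lip}(u_{1,\la})\le C/(-\om-K_0)$ with $C$ depending only on $L_{h^\om},L_g,L_k,L_\psi$ and the monotone functions $L_1^\om,L_2$ evaluated on the (already known) uniform bounds for $u_{1,\la}$ from Theorem \ref{halfline-uniform-convergent}.

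Next I would run the recursion: assuming $\mathrm{Lip}(u_{n,\la})\le M$ with $M$ independent of $n$ and $\la$, the map $t\mapsto (u_{n,\la})_t\in E$ is $M$-Lipschitz, so the same argument applied to (\ref{recursion_step}) gives $\mathrm{Lip}(u_{n+1,\la})\le C'/(-\om-K_0)$ with the \emph{same} constant, because the $u_{n,\la}$-dependence again enters only through the Lipschitz delay term with factor $K_0$; choosing $M$ as this fixed point value closes the induction. Then $\mathrm{Lip}(u_n)=\lim_{\la\to 0}\mathrm{Lip}(u_{n,\la})\le M$ for each $n$, and since $u_n\to u$ uniformly (Theorem \ref{halfline-uniform-convergent}), $u$ is Lipschitz with the same constant $M$.

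The main obstacle I anticipate is the careful bookkeeping in the single-approximant estimate: one must pass from the pointwise control inequality for the $m$-dissipative resolvents to an estimate on $u_{n,\la}(t)-u_{n,\la}(t+\sigma)$ after the $(\partial_t)_\la$-inversion, i.e. through the integral operator $T_\la$ with kernel $\frac1\la e^{-\tau/\la}$, and verify that the "extra" terms are genuinely $O(\la)$-small so that the translation-nonexpansiveness of the resolvent is only perturbed by a contraction once $\la$ is small. This is morally the same computation as in \cite[pp.\ 1056--1057]{Kreulichevo}, adapted to keep the Lipschitz-in-$\sigma$ dependence explicit; the sign conditions in the hypotheses are exactly what is required to make the absorption uniform in $n$ and in small $\la$.
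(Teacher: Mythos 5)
Your proposal follows essentially the same route as the paper: the paper also compares $T_\la^n\psi(t-s)$ with $T_\la^n\psi(t)$, feeds the Lipschitz bounds for $h^{\om},g,k$ and for $t\mapsto\psi_t$ into the control inequality of Assumption \ref{IVFDE_2_re}, takes the one-sided supremum of the difference quotients $K_{\la,n}(t)=\sup_{s>0}\frac1s\nrm{T_\la^n\psi(t-s)-T_\la^n\psi(t)}$, absorbs the convolution term via Proposition \ref{integral-ineq}, and obtains a bound uniform in $n$ and small $\la$ because the iteration operator $K_2\la I+S_\la$ has spectral radius $q<1$ exactly when $L_g+K_0<-\om$, after which the equi-Lipschitz property passes to the double limit. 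The only cosmetic difference is that the paper organizes the absorption through the positive resolvent of the integral inequality and a spectral-radius argument rather than a direct Gronwall/fixed-point phrasing, but the mechanism and the role of the sign condition are the same as in your sketch.
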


To obtain strong solutions in the case of RNP spaces or more restrictive for reflexive Banach spaces, apply the proof of \cite[6.37]{Ito_Kappel} and Theorem \ref{halfline-uniform-convergent}.

Moreover the approximation applies to obtain periodicity analogous to the result in \cite{litcanu}. Additionally, it applies for the finite delay case.
\begin{theo}\label{theo-periodic-sol}
Let Assumption \ref{IVFDE_1_re} and either Assumption \ref{IVFDE_2_og_re} with $K_0<-\om,$ or Assumption \ref{IVFDE_2_re} with $\max\bk{K_0,L_g}<-\om$ hold. Further, let for some $T>0$
$$
A(t+T,\vp)=A(t,\vp)
$$
for all $t\in \re$ and $\vp\in E.$ Then there exist a $T$-periodic solution of (\ref{gFDE_re}).
\end{theo}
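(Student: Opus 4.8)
The plan is to derive the $T$-periodic solution from Theorem \ref{halfline-uniform-convergent} by exploiting the uniqueness-type statement already built into it, namely that the limit of the Recursion \ref{recursion} is independent of the starting point $\psi\in Y$. First I would observe that because $A(t+T,\vp)=A(t,\vp)$ for all $t$ and $\vp$, the shift operator $S_T:Y\to Y$, $(S_T\psi)(t):=\psi(t+T)$, is an isometry commuting with the whole construction: if $u_{n,\la}$ solves the $n$-th equation of the recursion started at $\psi$, then $S_Tu_{n,\la}$ solves the $n$-th equation started at $S_T\psi$. This is because $(\partial_t)_\la$ is translation-invariant (its defining convolution kernel $\frac1\la e^{-\tau/\la}$ does not see the time origin) and, by $T$-periodicity of $A$, the inclusion \eqref{recursion_step} is preserved under the substitution $t\mapsto t+T$, with $((u_{n,\la})_t$ at $t+T$ equal to $((S_Tu_{n,\la})_t$. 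Hence $S_T$ maps the approximating double sequence for $\psi$ onto the one for $S_T\psi$.

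Next I would pass to the limit. By Theorem \ref{halfline-uniform-convergent} the double limit $u:=\lim_{n\to\infty}\lim_{\la\to0}u_{n,\la}$ exists uniformly on $\re$ and is \emph{independent of the starting point}. Applying $S_T$, which is a linear isometry of $Y$ and therefore continuous for uniform convergence, gives that $S_Tu=\lim_{n\to\infty}\lim_{\la\to0}S_Tu_{n,\la}$, and the right-hand side is exactly the solution produced by the recursion started at $S_T\psi\in Y$. By the independence-of-starting-point part of Theorem \ref{halfline-uniform-convergent}, that solution is again $u$. Therefore $S_Tu=u$, i.e. $u(t+T)=u(t)$ for all $t\in\re$, so $u$ is the desired $T$-periodic solution of \eqref{gFDE_re}; it is automatically an integral solution to \eqref{whole-Cauchy-problem} with $B(t)=A(t,u_t)$, again by Theorem \ref{halfline-uniform-convergent}.

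The main obstacle I anticipate is the first step: verifying cleanly that each stage of the recursion genuinely commutes with $S_T$. One has to check that the \emph{unique} solution $u_{1,\la}$ of \eqref{recursion_start} in $Y$ — whose existence and uniqueness are what underlie the recursion — satisfies $S_Tu_{1,\la}=$ the solution of the same equation with $(\psi)_t$ replaced by $(S_T\psi)_t$; this requires knowing that \eqref{recursion_start} has a unique bounded uniformly continuous solution (so that "the solution" is well-defined and $S_T$ can be applied to it), which is part of the machinery imported from \cite{Kreulichevo}. Once that uniqueness is in hand at stage $n=1$, the inductive step $n\to n+1$ is routine: $T$-periodicity of $t\mapsto(u_{n,\la})_t$, inherited from $S_Tu_{n,\la}=u_{n,\la}^{(S_T\psi)}$, makes the operator $t\mapsto A(t,(u_{n,\la})_t)$ $T$-periodic, and then the $n=1$ argument applies verbatim to \eqref{recursion_step}. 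The limit passage itself is harmless because uniform convergence is preserved by the isometry $S_T$, so no quantitative estimate beyond those already established in Theorem \ref{halfline-uniform-convergent} is needed.
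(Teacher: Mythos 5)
Your argument is correct, but it is not the one the paper uses. The paper's proof is a one-liner: since $A(t+T,\vp)=A(t,\vp)$, the fixpoint mapping of Proposition \ref{T-la-defined} leaves the closed translation-invariant subspace $Y=\bk{f\in BUC(\re,X): f(t+T)=f(t)}$ invariant, so one simply runs the whole Recursion \ref{recursion} inside that subspace starting from a $T$-periodic $\psi$ and obtains $u$ as a uniform limit of $T$-periodic functions; this is exactly the specialization of Theorem \ref{theo-Y-sol}. You instead start from an arbitrary $\psi\in Y=BUC(\re,X)$, show that the shift $S_T$ intertwines the recursion started at $\psi$ with the one started at $S_T\psi$ (using translation invariance of $(\partial_t)_\la$, the identity $J_\la^{\om}(t+T,\vp)=J_\la^{\om}(t,\vp)$, and the uniqueness of the fixpoint in Proposition \ref{T-la-defined} so that ``the solution'' is well defined), and then invoke the independence-of-the-starting-point clause of Theorem \ref{halfline-uniform-convergent} to conclude $S_Tu=u$. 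Both routes rest on the same commutation of $S_T$ with the construction; the paper's subspace-invariance formulation has the advantage of generalizing verbatim to any closed translation-invariant subspace (almost periodic, almost automorphic, anti-periodic with the sign twist), which is why the paper states Theorem \ref{theo-Y-sol} abstractly, whereas your symmetry-plus-uniqueness argument is tailored to situations where the desired property is fixed-point invariance under a single isometry, and it buys you the conclusion without having to choose a $T$-periodic starting point. Your verification of the intertwining at stage $n=1$ and the inductive step is sound, and the passage to the limit under the isometry $S_T$ is unproblematic, so the proof stands.
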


\begin{remk}
A similar result can be found in the case of anti-periodicity.
$$ Y=\bk{f\in BUC(\re,X): f(t+T)=-f(t)}, $$ if
$$
J_{\la}(t+T,\psi_ {t+T})f(t+T)=-J_{\la}(t,\psi_t)f(t)
$$
for all $\psi,f\in Y.$
\end{remk}

The previous observation applies to closed and translation invariant subspaces of $BUC(\re,X)$ and leads to the follow abstract version. The method applies to periodic, antiperiodic, almost periodic and almost automorphic solutions 

\begin{theo}\label{theo-Y-sol}
Let Assumption \ref{IVFDE_1_re} and either Assumption \ref{IVFDE_2_og_re} with $K_0<-\om,$ or Assumption \ref{IVFDE_2_re} with $\max\bk{K_0,L_g}<-\om$ hold. Further let $Y\subset BUC(\re,X)$ a closed and translation invariant subspace. If
$$
\bk{t\mapsto J_{\la}^{\om}(t,\psi_t)f(t)} \in Y
$$
for all $\psi,f\in Y,$ then the equation (\ref{gFDE_re}) has a solution $u\in Y.$
\end{theo}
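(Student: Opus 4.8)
The plan is to run the same construction as in Theorem \ref{halfline-uniform-convergent}, but to carry out the entire recursion inside the closed subspace $Y$ instead of in $BUC(\re,X)$, and then to invoke uniqueness of the solution to conclude that the limit produced by the general construction actually lands in $Y$. First I would observe that the Yosida approximation of the derivative $(\partial_t)_\la$ maps $Y$ into itself: since $Y$ is translation invariant, each translate $\tau\mapsto u(t-\tau)$ of an element $u\in Y$ stays in $Y$, and the integral $\int_0^\infty \nres{\tau}u(t-\tau)\,d\tau$ is a Bochner integral of a $Y$-valued map against a probability-type density, so it lies in the closed subspace $Y$; hence $(\partial_t)_\la u\in Y$ as well. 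Consequently the resolvent-type equation defining $u_{1,\la}$, namely $(\partial_t)_\la u_{1,\la}(t)\in A(t,(\psi)_t)u_{1,\la}(t)+\om u_{1,\la}(t)$, can be rewritten as a fixed-point problem $u_{1,\la}(t) = J_\la^\om(t,\psi_t)\big(\tfrac1\la\int_0^\infty\nres{\tau}u_{1,\la}(t-\tau)\,d\tau\big)$ (up to the rescaling recorded in Section \ref{Abstract-FDEs}), and the contraction that solves it — which is how $u_{1,\la}$ is built in \cite{Kreulichevo} and in the proof of Theorem \ref{halfline-uniform-convergent} — now operates on $Y$ rather than on all of $BUC(\re,X)$, precisely because of the hypothesis $\{t\mapsto J_\la^\om(t,\psi_t)f(t)\}\in Y$ for all $\psi,f\in Y$. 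Since $Y$ is closed, the fixed point stays in $Y$, so $u_{1,\la}\in Y$.

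Next I would push this through the recursion step. Suppose $u_{n,\la}\in Y$; then $(u_{n,\la})_t\in E$ is an admissible "memory slot", and the same fixed-point argument applied with $\psi$ replaced by $u_{n,\la}$ shows $u_{n+1,\la}\in Y$, again using translation invariance for the $(\partial_t)_\la$ term and the hypothesis on $J_\la^\om$ for the resolvent term. By induction the whole double family $\{u_{n,\la}\}$ lies in $Y$. Theorem \ref{halfline-uniform-convergent} already gives that $\lim_{n\to\infty}\lim_{\la\to0}u_{n,\la}$ exists uniformly on $\re$ and is the (unique, starting-point–independent) solution to \eqref{gFDE_re}; since $Y$ is closed under uniform convergence on $\re$, this limit lies in $Y$. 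That identifies $u\in Y$ as a solution of \eqref{gFDE_re}, which is the assertion.

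One technical point deserves care and is the place I expect most of the work to concentrate: I must make sure the invariance hypothesis, stated for the perturbed resolvent $J_\la^\om$, is exactly what the construction uses at each stage, including the rescaling $J_\la^\om(t,\vp)x = J_{\la/(1-\la\om)}(t,\vp)\big(\tfrac1{1-\la\om}x\big)$ and the fact that the contraction iterates involve compositions of $J_\la^\om$ with the convolution operator $\psi\mapsto \int_0^\infty\nres{\tau}\psi(\cdot-\tau)\,d\tau$; both of these must be checked to preserve $Y$, the first by hypothesis and the second by translation invariance plus closedness under Bochner integration. The remaining obstacle is bookkeeping rather than conceptual: one has to confirm that the a priori bounds and the uniform-convergence estimates from the proof of Theorem \ref{halfline-uniform-convergent} are insensitive to restricting the ambient space from $BUC(\re,X)$ to $Y$ — which they are, since those estimates are pointwise-in-$t$ consequences of the control inequalities \eqref{perturbed_control} and \eqref{perturbed_control_og} and never use anything about the ambient function space beyond the sup-norm. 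Finally, the specializations in the preceding remarks (periodic, anti-periodic $Y=\{f: f(t+T)=-f(t)\}$, almost periodic, almost automorphic) follow by verifying the single hypothesis $\{t\mapsto J_\la^\om(t,\psi_t)f(t)\}\in Y$ in each case, e.g. from $A(t+T,\vp)=A(t,\vp)$ in the periodic case, exactly as recorded there.
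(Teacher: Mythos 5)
Your proposal is correct and follows essentially the same route as the paper: the paper gives no separate proof of Theorem \ref{theo-Y-sol}, but its proofs of the special cases (Theorems \ref{theo-bd-sol} and \ref{theo-periodic-sol}) consist of exactly your argument, namely that the fixpoint mapping $F$ of Proposition \ref{T-la-defined} leaves the closed, translation-invariant subspace $Y$ invariant, so the whole recursion and hence its uniform limit stay in $Y$. Your additional care about the convolution term preserving $Y$ (translation invariance plus closedness under the Bochner integral) and about the estimates being insensitive to the ambient space is a correct and slightly more explicit rendering of what the paper leaves implicit.
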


In the case of almost periodicity the forthcoming lemma applies to verify the assumption of the previous theorem.
\begin{lem}\label{ap-inv-lem}
If $\psi,f\in AP(\re,X),$ and $\bk{t\mapsto J_{\la}^{\om}(t,\vp)x} \in AP(\re,X),$ for all $\vp\in E,$ and $x\in X,$ then
$$
\bk{t\mapsto J_{\la}^{\om}(t,\psi_t)f(t)} \in AP(\re,X).
$$
\end{lem}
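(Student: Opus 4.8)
The plan is to show that $AP(\re,X)$ is closed under the nonlinear resolvent map when the two arguments $\psi$ and $f$ are themselves almost periodic, by reducing the ``two-variable'' almost periodicity to the hypothesized ``one-variable'' almost periodicity using uniform continuity estimates coming from the dissipativity/control assumptions. First I would recall the Bochner characterization: a bounded continuous function $w:\re\to X$ lies in $AP(\re,X)$ iff the family of translates $\{w(\cdot+h):h\in\re\}$ is relatively compact in $BUC(\re,X)$, equivalently iff every sequence of translates has a uniformly convergent subsequence. So I would take an arbitrary sequence $(h_n)$ and extract, using almost periodicity of $\psi$ (hence of the $E$-valued map $t\mapsto\psi_t$, since $\ji$ is either a compact interval or $\re^-$ with the $BUC$-norm, and translation is an isometry there) and of $f$, a common subsequence along which $\psi_{\cdot+h_n}\to\tilde\psi$ in $BUC(\re,E)$ and $f(\cdot+h_n)\to\tilde f$ in $BUC(\re,X)$.

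Next I would set up the key Lipschitz-type estimate for $J_\la^\om$ in its arguments. From Assumption \ref{IVFDE_1_re} (m-dissipativity) the resolvent $J_\la^\om(t,\vp)$ is a contraction in $x$ after the rescaling $J_{\la/(1-\la\om)}$, with a factor $1/(1-\la\om)$; from the perturbed control inequalities (\ref{perturbed_control}) or (\ref{perturbed_control_og}) one gets, for fixed $x$, a bound on $\nrm{J_\la^\om(t_1,\vp_1)x - J_\la^\om(t_2,\vp_2)x}$ in terms of $\nrm{h^\om(t_1)-h^\om(t_2)}$, $\nrm{g(t_1)-g(t_2)}$, $\nrm{k(t_1)-k(t_2)}$ and $K_0\nrm{\vp_1-\vp_2}_E$, uniformly for $x$ in bounded sets and $t_i$ in $\re$ — this is exactly the computation underlying Remark \ref{D-vp_eq_D}. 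Combining the contraction in $x$ with this joint modulus of continuity, and using that $f$ is bounded so all the $x$-arguments stay in a fixed ball, I would write
\begin{eqnarray*}
\lefteqn{\nrm{J_\la^\om(t+h_n,\psi_{t+h_n})f(t+h_n) - J_\la^\om(t+h_m,\psi_{t+h_m})f(t+h_m)}}\\
&\le& \frac{1}{1-\la\om}\nrm{f(t+h_n)-f(t+h_m)} + \frac{K_0\la}{1-\la\om}\nrm{\psi_{t+h_n}-\psi_{t+h_m}}_E\\
&& + C\bigl(\nrm{h^\om(t+h_n)-h^\om(t+h_m)} + \nrm{g(t+h_n)-g(t+h_m)} + \nrm{k(t+h_n)-k(t+h_m)}\bigr),
\end{eqnarray*}
where $C$ depends only on $\la$, $\om$, $\nrm{f}_\infty$ and the control functions $L_1^\om, L_2$. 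Since $f,h,g,k$ and $t\mapsto\psi_t$ are all almost periodic, the right-hand side tends to $0$ uniformly in $t$ as $n,m\to\infty$ along the chosen subsequence, so $\{t\mapsto J_\la^\om(t,\psi_t)f(t)\}$ has the Bochner property and hence is in $AP(\re,X)$.

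One subtlety I should address carefully: the hypothesis only says $t\mapsto J_\la^\om(t,\vp)x$ is almost periodic for \emph{fixed} $\vp$ and $x$ — it is needed to guarantee that the map is genuinely almost periodic and not merely uniformly continuous with almost periodic ``data,'' because the dependence of $J_\la^\om$ on $t$ is not only through the control functions but through the operator family $A(t,\cdot)$ itself. I would handle this by a standard two-step approximation: first replace $f$ by a simple (finitely-valued) function and $\psi_t$ by a value frozen on a fine partition of a relatively dense set, reducing to finitely many maps of the form $t\mapsto J_\la^\om(t,\vp_j)x_k$ which are almost periodic by hypothesis and whose finite ``max/sup'' combination is again almost periodic; then let the mesh go to zero and invoke the uniform estimate above to pass the almost periodicity to the limit, using that $AP(\re,X)$ is closed in $BUC(\re,X)$. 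The main obstacle is precisely this interplay — making the freezing argument compatible with the nonlinear, $\vp$-dependent resolvent while keeping all estimates uniform in $t$ — but it is routine once the Lipschitz estimate in $(\vp,x)$ from the control inequalities and the $1/(1-\la\om)$ contraction are in hand.
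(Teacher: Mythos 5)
Your overall strategy matches the paper's: establish the Lipschitz estimate $\nrm{J_{\la}^{\om}(t,\vp)x-J_{\la}^{\om}(t,\phi)y}\le \frac{\la K_0}{1-\la\om}\nrm{\vp-\phi}_E+\nrm{x-y}$ uniformly in $t$, observe that $t\mapsto\psi_t$ is almost periodic as an $E$-valued function, and then invoke a superposition principle for almost periodic functions; the paper simply cites \cite[Ch.\ VII, Lemma 4.1]{Dal-Krein} for that last step, whereas you prove it by hand via Bochner's criterion and the freezing/discretization argument of your final paragraph. One step in the middle of your write-up is wrong, however: in the first displayed estimate you conclude that the right-hand side tends to zero because ``$f,h,g,k$ and $t\mapsto\psi_t$ are all almost periodic.'' The control functions $h$, $g$, $k$ are only assumed bounded and uniformly continuous (resp.\ Lipschitz) in Assumptions \ref{IVFDE_2_og_re} and \ref{IVFDE_2_re}; they are not almost periodic, and if they were, the hypothesis that $t\mapsto J_{\la}^{\om}(t,\vp)x$ is almost periodic for fixed $(\vp,x)$ would be superfluous. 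So that version of the argument does not close. Your final paragraph --- covering the relatively compact range of $t\mapsto(\psi_t,f(t))$ by finitely many points $(\vp_j,x_k)$, using the hypothesis on the finitely many maps $t\mapsto J_{\la}^{\om}(t,\vp_j)x_k$, and passing to the limit with the uniform Lipschitz estimate in $(\vp,x)$ --- is the correct route and is precisely the content of the lemma the paper cites; that paragraph, not the subsequence computation preceding it, should carry the proof.
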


\begin{remk}
To find almost automorphic solutions consider 
$$Y=\bk{f\in BUC(\re,X): f \mbox{ is almost automorphic }} $$
\end{remk}

\section{Existence on the whole line}

The idea is to apply the Banach Fixpoint Principle on $Y$ for a forthcoming iteration. We start with the following proposition.

\begin{pro} \label{T-la-defined}
Let $\om <0,$ $\psi \in Y$, and $A(t,\psi_t)$ satisfy Assumption \ref{IVFDE_1_re} and  either Assumption \ref{IVFDE_2_og_re}, or Assumption \ref{IVFDE_2_re} then $\bk{x \mapsto J^{\om}_{\la}(t,\psi_t)x}$ is a strict contraction with $q=\frac{1}{1-\la\om}$, and
$$
F(u)(t):=\Jl^{\om}(t,(\psi)_t)\fk{\frac{1}{\la}\int_0^{\infty}\nres{\tau}u(t-\tau)d\tau}.
$$
has a fixpoint $u_{1,\la}\in Y=BUC(\re,X).$ 
\end{pro}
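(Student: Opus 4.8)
The plan is to verify the two assertions in turn, the contraction property of the resolvent $x\mapsto J_\la^\om(t,\psi_t)x$ first, and then the existence of a fixed point of $F$ via the Banach fixed point principle on $Y=BUC(\re,X)$.

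For the contraction estimate, I would fix $t\in\re$ and apply the perturbed control inequality (\ref{perturbed_control}) (or (\ref{perturbed_control_og})) with $t_1=t_2=t$ and $\vp_1=\vp_2=\psi_t$, so that all the difference terms involving $h^\om$, $g$, $k$ and $\nrE{\vp_1-\vp_2}$ vanish. Writing $x_i=J_\la^\om(t,\psi_t)z_i$ and $y_i=\frac1\la(x_i-z_i)\in A(t,\psi_t)x_i+\om x_i$, the inequality collapses to $\nrm{x_1-x_2}\le\nrm{x_1-x_2-\la(y_1-y_2)}+\om\nrm{x_1-x_2}=\nrm{z_1-z_2}+\om\nrm{x_1-x_2}$, whence $(1-\om)\nrm{x_1-x_2}\le\nrm{z_1-z_2}$; combined with the rescaling identity $J_\la^\om(t,\psi_t)=J_{\la/(1-\la\om)}(t,\psi_t)\circ\frac{1}{1-\la\om}$ and the fact that the unperturbed resolvent $J_{\la/(1-\la\om)}(t,\psi_t)$ is nonexpansive (Assumption \ref{IVFDE_1_re}, m-dissipativity), one reads off the Lipschitz constant $q=\frac{1}{1-\la\om}<1$ since $\om<0$. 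One should also note here that $\la<\frac1\om$ is automatic because $\om<0$, so $J_\la^\om(t,\psi_t)$ is everywhere defined by m-dissipativity.

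For the fixed point, define $G\colon Y\to Y$ by $(Gu)(t)=\frac1\la\int_0^\infty\nres{\tau}u(t-\tau)\,d\tau$; this is a bounded linear averaging operator with $\nrm{Gu}_\infty\le\nrm{u}_\infty$ (the kernel $\frac1\la\nres{\tau}$ integrates to $1$ on $[0,\infty)$), and $Gu$ inherits uniform continuity from $u$, so $Gu\in Y$. Then $F(u)(t)=J_\la^\om(t,\psi_t)(Gu)(t)$, and using the contraction estimate pointwise in $t$ gives $\nrm{F(u)(t)-F(v)(t)}\le q\,\nrm{(Gu)(t)-(Gv)(t)}\le q\,\nrm{u-v}_\infty$, i.e. $F$ is a strict contraction on $Y$ with constant $q<1$. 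Banach's fixed point theorem then yields a unique $u_{1,\la}\in Y$ with $F(u_{1,\la})=u_{1,\la}$, and unwinding the definition of $F$ shows $\frac1\la(u_{1,\la}(t)-(Gu_{1,\la})(t))\in A(t,\psi_t)u_{1,\la}(t)+\om u_{1,\la}(t)$, which is exactly $(\partial_t)_\la u_{1,\la}(t)\in A(t,\psi_t)u_{1,\la}(t)+\om u_{1,\la}(t)$, equation (\ref{recursion_start}) with $\psi$ in place of $u_{n,\la}$.

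The only genuinely delicate point is checking that $F$ maps $Y$ into $Y$ — i.e. that $t\mapsto J_\la^\om(t,\psi_t)(Gu)(t)$ is bounded and uniformly continuous. Boundedness follows once one has a fixed reference point $x_0\in\overline D$ and controls $\nrm{J_\la^\om(t,\psi_t)x_0}$ uniformly in $t$, which comes from Remark \ref{D-vp_eq_D} together with the control inequality; continuity in $t$ needs the full perturbed control inequality, now with $t_1\ne t_2$, to estimate $\nrm{J_\la^\om(t_1,\psi_{t_1})z-J_\la^\om(t_2,\psi_{t_2})z}$ by the moduli of continuity of $h^\om$, $g$, $k$ and of $t\mapsto\psi_t\in E$ — all uniformly continuous by hypothesis on $\psi\in Y$ and on the control functions. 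This is the step I expect to absorb most of the work; the contraction and fixed-point parts are then routine.
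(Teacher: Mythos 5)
Your proposal is correct, and it is in fact more explicit than what the paper offers: the paper states Proposition \ref{T-la-defined} without a proof, relying implicitly on the observation (spelled out later, in the proof of Lemma \ref{iteration-possible_whole-line}) that once the history argument is frozen at $\psi$, the operator $B(t):=A(t,\psi_t)$ satisfies the hypotheses of \cite{Kreulichevo}, whose results then supply the fixed point $u_{1,\la}$. Your direct argument --- contraction constant $q=\frac{1}{1-\la\om}$ from the rescaling identity $J_\la^\om(t,\psi_t)=J_{\la/(1-\la\om)}(t,\psi_t)\circ\frac{1}{1-\la\om}$ plus nonexpansivity of the unperturbed resolvent, then Banach's fixed point theorem for $F=J_\la^\om(t,\psi_t)\circ G$ with $G$ the norm-one averaging operator --- reconstructs exactly what is being delegated, and you correctly identify the invariance $F(Y)\subset Y$ as the step carrying the real work (boundedness via a reference point in $\hat D$ and Remark \ref{D-vp_eq_D}, uniform continuity in $t$ via the control inequality with $t_1\ne t_2$; under Assumption \ref{IVFDE_2_re} the term $\la\nrm{g(t_1)-g(t_2)}\nrm{y_2}$ forces you to bound $\nrm{A_\la(t_2,\psi_{t_2})z}$, which again comes from Remark \ref{D-vp_eq_D}). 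One small caution: your first derivation of the contraction estimate, taking the displayed inequality (\ref{perturbed_control}) literally with its term $\om\nrm{x_1-x_2}$, yields the factor $\frac{1}{1-\om}$ rather than the asserted $\frac{1}{1-\la\om}$ (that term should read $\la\om\nrm{x_1-x_2}$); your second route through the rescaling identity is the clean one and gives the stated constant, so nothing is lost.
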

We define
\begin{equation}
\Funk{T_{\la}}{Y}{Y}{\psi}{u_{1,\la}}
\end{equation}

\begin{lem}\label{iteration-possible_whole-line}
Let Assumption \ref{IVFDE_1_re}, and either Assumption \ref{IVFDE_2_og_re} with $\om <0$, or Assumption \ref{IVFDE_2_re} with $L_g<-\om$ hold. If $\psi \in Y$ is Lipschitz, then 
\begin{enumerate}
 \item[a)]$\bk{T_{\la}\psi}_{\la>0}$ is uniformly bounded on $\re.$
  \item[b)] $T_\la\psi$ is equi Lipschitz on $\re$ in the case of  Assumption \ref{IVFDE_2_re}
 \item[c)]$T_\la\psi$ is uniformly equicontinuous on $\re$ in the case of  Assumption \ref{IVFDE_2_og_re}
 \item[e)]there exists $u_1 \in Y$ s.t $\lim_{\la\to 0} T_{\la}\psi \to u_1$ uniformly on $\re.$ 
 \end{enumerate}
\end{lem}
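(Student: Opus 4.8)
The plan is to exploit the fixpoint characterization $u_{1,\la}=F(u_{1,\la})$ together with the contraction constant $q=\frac{1}{1-\la\om}$ from Proposition \ref{T-la-defined}, and then to run the standard dissipative-operator estimates against the resolvent $J_\la^\om(t,\psi_t)$ using the perturbed control inequalities (\ref{perturbed_control}) and (\ref{perturbed_control_og}). For part (a), I would fix a point $[x_0,y_0]\in A(t_0,\psi_{t_0})+\om I$ (possible since $\hat D\neq\emptyset$ by Remark \ref{D-vp_eq_D}) and estimate $\nrm{u_{1,\la}(t)-x_0}$. Writing $u_{1,\la}(t)=\Jl^\om(t,\psi_t)\big(\frac1\la\int_0^\infty\nres{\tau}u_{1,\la}(t-\tau)d\tau\big)$ and using that $J_\la^\om(t,\psi_t)$ is a strict contraction with constant $q$ toward the fixed point of the resolvent, together with the control terms coming from $\nrm{\hw(t)-\hw(t_0)}$, $\nrm{g(t)-g(t_0)}$, $\nrm{k(t)-k(t_0)}$ and $K_0\nrE{\psi_t-\psi_{t_0}}$ (all bounded since $\hw,g,k$ are bounded and $\psi\in Y=BUC$), one obtains an inequality of the form $M_\la\le qM_\la+C$ where $M_\la=\sup_{t\in\re}\nrm{u_{1,\la}(t)-x_0}$ and $C$ is independent of $\la$ (for $\la$ small, $q<1$ and in fact $q\to1$, so one must be careful: the constant $C$ itself carries a factor that compensates, exactly as in \cite[pp.~1056--1057]{Kreulichevo}). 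This yields the uniform bound on $\bk{T_\la\psi}_{\la>0}$.

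For parts (b) and (c), the key device is the translation $t\mapsto t+\delta$. I would compare $u_{1,\la}(t+\delta)$ with $u_{1,\la}(t)$ by applying the dissipativity inequality in Assumption \ref{IVFDE_2_re} (resp.\ \ref{IVFDE_2_og_re}) to the two points $[y, u_{1,\la}(t+\delta)]$ and $[y',u_{1,\la}(t)]$ produced by the resolvents at times $t+\delta$ and $t$; the difference of the right-hand sides brings in $\nrm{\hw(t+\delta)-\hw(t)}$, $\nrm{g(t+\delta)-g(t)}$, $\nrm{k(t+\delta)-k(t)}$ and $K_0\nrE{\psi_{t+\delta}-\psi_t}$, plus the self-referential term coming from $\frac1\la\int_0^\infty\nres\tau\big(u_{1,\la}(t+\delta-\tau)-u_{1,\la}(t-\tau)\big)d\tau$. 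Under Assumption \ref{IVFDE_2_re} the functions $g,h,k$ are Lipschitz and $\psi$ is Lipschitz, so every increment is $O(\delta)$ with constants tied to $L_g,L_h,L_k,K_0$; collecting terms and using $L_g<-\om$ (resp.\ $\max\bk{K_0,L_g}<-\om$ where needed) the self-referential Gronwall-type estimate closes with a $\la$-independent Lipschitz constant, giving (b). Under the weaker Assumption \ref{IVFDE_2_og_re} only uniform continuity of $h,k$ is available, so one gets a $\la$-independent modulus of continuity rather than a Lipschitz bound, i.e.\ uniform equicontinuity, which is (c).

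For part (e), with (a) and (b)/(c) in hand I would pass to the limit $\la\to0$. On any compact interval $[-N,N]$, the family $\bk{T_\la\psi}$ is bounded and equicontinuous, but Arzel\`a--Ascoli needs compactness in $X$; instead the cleaner route, following \cite{Kreulichevo}, is to show directly that $\bk{T_\la\psi}_{\la>0}$ is Cauchy as $\la\to0$ by comparing $u_{1,\la}$ and $u_{1,\mu}$ through their defining fixpoint equations and the resolvent identity relating $J_\la^\om$ and $J_\mu^\om$, obtaining $\sup_{t}\nrm{u_{1,\la}(t)-u_{1,\mu}(t)}\to0$; the limit $u_1$ then lies in $Y=BUC(\re,X)$ because it is a uniform limit of bounded uniformly (equi)continuous functions. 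I expect the main obstacle to be precisely the bookkeeping in steps (a) and (b): the contraction constant $q=\frac1{1-\la\om}$ exceeds $1$, so the naive ``$M\le qM+C$'' is vacuous, and one must track how the $\la$ in front of the control terms in (\ref{perturbed_control}) cancels against the $\frac{1}{1-\la\om}$ factors — this is the same subtlety handled on pp.~1056--1057 of \cite{Kreulichevo}, and the estimates here must be arranged so that all bounds are genuinely uniform in $\la$ as $\la\to0$.
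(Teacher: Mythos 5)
Your proposal is mathematically sound, but it takes a genuinely different (and much longer) route than the paper. The paper's entire proof is a reduction: since $\psi$ is Lipschitz and bounded, the map $t\mapsto\psi_t$ is a Lipschitz, bounded $E$-valued function, so one may bundle it into an augmented control function $\tilde{h}(t)=(h^{\om}(t),k(t),\psi_t)$ with $L(\nrm{x})=L_1^{\om}(\nrm{x})+L_2(\nri{\psi})+K_0$; the frozen operator $B(t)=A(t,\psi_t)$ then satisfies the non-functional hypotheses of \cite{Kreulichevo}, and items a), b), c), e) are quoted verbatim from Prop.~4.1, Lemma~4.3, Cor.~4.3 and Lemma~4.6 of that paper. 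What you propose is essentially to re-derive those four cited results by hand: the reference-point estimate for a), the translation/Gronwall argument with Proposition \ref{integral-ineq} for b) and c), and the $\la$--$\mu$ Cauchy comparison for e). Your mechanisms are the right ones (in particular you correctly identify that the $\la$ prefactor on the control terms must cancel the degeneration of $\frac{1}{1-q}$ as $\la\to 0$, and that Arzel\`a--Ascoli is unavailable so one must prove Cauchyness directly), and the step where you observe that $K_0\nrE{\psi_{t+\delta}-\psi_t}=O(\delta)$ is exactly the observation the paper formalizes via $\tilde{h}$. The paper's route buys brevity and makes the structural point explicit --- freezing the history reduces the FDE to an already-solved evolution problem --- while yours buys self-containedness at the cost of substantial bookkeeping that your sketch does not actually carry out. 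One slip: in your closing paragraph you assert $q=\frac{1}{1-\la\om}>1$; with $\om<0$ one has $1-\la\om>1$, hence $q<1$, and the genuine difficulty is not that the contraction fails but that $q\to 1$ as $\la\to 0$, so the bound $\frac{C}{1-q}$ is only uniform because $C=O(\la)$ --- which is the point your parenthetical in part a) already gets right.
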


\begin{proof}[Proof of Lemma \ref{iteration-possible_whole-line}]
Under the Assumption \ref{IVFDE_2_re}, and awith $\psi \in Y$  given, the functions
\begin{equation} \label{modified_control}
\begin{cases}
\Funk{\tilde{h}}{[0,T]}{(X\times X \times E,\nrm{\cdot}_1)}{t}{(h^{\om}(t),k(t),\psi_t),} \\ 
\mbox{and}  \\
L(\nrm{x}):=L_1^{\om}(\nrm{x})+L_2(\nri{\psi})+K_0,
\end{cases}
\end{equation}
are the modified control functions. This definition implies

\begin{eqnarray} \label{modified_control}
\lefteqn{\nrm{x_1-x_2}}\nonumber \\
&\le & \nrm{x_1-x_2-\la(y_1-y_2)} +\la\nrm{\tilde{h}(t_1)-\tilde{h}(t_2)}L(\nrm{x}) \\
&& \la \nrm{g(t_1)-g(t_2)}\nrm{y_2}, \nonumber
\end{eqnarray}
for $(x_i,y_i)\in A(t_i,\psi_{t_i}).$  Therefore $B(t)=A(t,\psi_t)$ fulfills the Assumption 2.3 of \cite{Kreulichevo}.
A similar inequality comes with Assumption \ref{IVFDE_2_og_re}. Hence, in both cases with $B(t):=A(t,\psi_t)$ for $t\in\jz$ we are in the situation of \cite[Thm. 2.11, Thm 2.13(1)]{Kreulichevo}. \\
The item a) follows by \cite[Prop. 4.1, p. 1076]{Kreulichevo}. \\
The item b) is a consequence of \cite[Lemma 4.3, p. 1077]{Kreulichevo}, item c) of \cite[Cor. 4.3, p. 1078]{Kreulichevo}. The claim e) comes with \cite[Lemma 4.6, p. 1079]{Kreulichevo} which concludes the proof.
\end{proof}

\begin{remk}\label{history_control}
Let $\vp_i \in E,$ $i=1,2,$ $z\in X$ and $t\in \re,$ then
\begin{equation}
\nrm{J_{\la}^{\om}(t,\vp_1)z-J_{\la}^{\om}(t,\vp_2)z}\le \frac{K\la}{1-\la\om}\nrE{\vp_1-\vp_2}.
\end{equation}
\end{remk}
\begin{proof}[Proof of Remark \ref{history_control}]
Apply the inqualities (\ref{perturbed_control}) or (\ref{perturbed_control_og}) with $A_{\la}(t,\vp_i)z \in A(t,\vp_i)J_{\la}(t,\vp_i)z.$
\end{proof}

\begin{lem} \label{ul-fixpoint}
Let $\om<0,$ Assumption \ref{IVFDE_1_re}, and either Assumption \ref{IVFDE_2_og_re} or \ref{IVFDE_2_re} hold.
Further, let $\psi\in Y,$ and $T_{\la}$  satisfy the equation
\begin{equation} \label{psi-la-approx-gFDE}
\begin{array}{rcll} 
\fk{\partial_t}_{\la} T_{\la}\psi(t)&\in & A(t,(\psi)_t)T_{\la}\psi(t)+\om T_{\la}\psi(t)&:t\in\re
\end{array}
\end{equation}
If $t\in\re,$ and $\psi,\phi\in Y$ we have,
\begin{eqnarray}\label{iterate_ineq}
\lefteqn{\nrm{T_{\la}\psi(t) -T_{\la}\phi(t)}}\nonumber \\
&\le& \frac{\la K_0}{1-\la\om}\nrm{\psi_t-\phi_t}_E 
+\frac{K_0}{1-\la\om}\int_0^{\infty}\exp\fk{\frac{\om}{1-\la\om}\tau}\nrm{\psi_{t-\tau}-\phi_{t-\tau}}_E d\tau 
\end{eqnarray}

If $I(t):=(-\infty,t]$ then
\begin{eqnarray} \label{iterate_ineq_2}
\lefteqn{\sup_{x\in I(t)}\nrm{T_{\la}\psi(x) -T_{\la}\phi(x)}}\nonumber \\
&\le&  \frac{\la K_0}{1-\la\om}\sup_{x\in I(t)}\nrm{\psi(x)-\phi(x)} \nonumber \\
&&+ \frac{K_0}{1-\la\om}\int_0^{\infty}\exp\fk{\frac{\om}{1-\la\om}\tau}\sup_{x\in I(t-\tau)}        \nrm{\psi(x)-\phi(x)} d\tau
\end{eqnarray}
\end{lem}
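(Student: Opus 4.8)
The plan is to exploit the fixpoint characterization of $T_\la\psi$ from Proposition \ref{T-la-defined}, namely that $T_\la\psi(t)=J^\om_\la(t,\psi_t)\big(\tfrac1\la\int_0^\infty e^{-\tau/\la}\,T_\la\psi(t-\tau)\,d\tau\big)$, and similarly for $T_\la\phi$ with history $\phi$. First I would write both $T_\la\psi(t)$ and $T_\la\phi(t)$ using this formula and split the difference through the intermediate term $J^\om_\la(t,\psi_t)\big(\tfrac1\la\int_0^\infty e^{-\tau/\la}T_\la\phi(t-\tau)\,d\tau\big)$. The first piece is controlled by Remark \ref{history_control}, which gives the bound $\tfrac{K_0\la}{1-\la\om}\nrm{\psi_t-\phi_t}_E$ (note $K$ there can be taken to be $K_0$ from the control inequality after the resolvent identity in the proof of that remark). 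The second piece is $J^\om_\la(t,\psi_t)$ applied to two arguments; since $x\mapsto J^\om_\la(t,\psi_t)x$ is a strict contraction with constant $q=\tfrac1{1-\la\om}$, it is bounded by $\tfrac1{1-\la\om}\cdot\tfrac1\la\int_0^\infty e^{-\tau/\la}\nrm{T_\la\psi(t-\tau)-T_\la\phi(t-\tau)}\,d\tau$.

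Denoting $M(t):=\nrm{T_\la\psi(t)-T_\la\phi(t)}$ and $m(t):=\nrm{\psi_t-\phi_t}_E$, this yields the integral inequality
\begin{equation*}
M(t)\le \frac{\la K_0}{1-\la\om}\,m(t)+\frac{1}{(1-\la\om)\la}\int_0^\infty e^{-\tau/\la}M(t-\tau)\,d\tau.
\end{equation*}
The next step is to solve/iterate this inequality. The natural move is to take $\sup_{s\le t}$ is \emph{not} what we want first for \eqref{iterate_ineq}; instead, I would bound $M(t-\tau)$ under the integral by iterating the inequality itself, or — cleaner — multiply by an exponential weight and recognize the result as a renewal-type estimate. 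Concretely, test against $e^{-\sigma t}$ and observe that the kernel $\tfrac1\la e^{-\tau/\la}$ has Laplace transform $\tfrac1{1+\la\sigma}$, so the geometric series $\sum_{j\ge0}\big(\tfrac1{1-\la\om}\cdot\tfrac1{1+\la\sigma}\big)^j$ with the choice matching $\sigma=-\tfrac{\om}{1-\la\om}$ collapses the convolution powers of $\tfrac1\la e^{-\tau/\la}$ into the single kernel $\tfrac1{1-\la\om}\exp\big(\tfrac{\om}{1-\la\om}\tau\big)$ on $(0,\infty)$. After summing, the first inhomogeneous term $\tfrac{\la K_0}{1-\la\om}m(t)$ is likewise convolved, producing exactly the two terms in \eqref{iterate_ineq}: the pointwise term $\tfrac{\la K_0}{1-\la\om}m(t)$ (the $j=0$ contribution) and $\tfrac{K_0}{1-\la\om}\int_0^\infty e^{\om\tau/(1-\la\om)}m(t-\tau)\,d\tau$ (the $j\ge1$ tail). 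This is the step I expect to be the main obstacle — getting the bookkeeping of the convolution powers right so that the constants come out exactly as stated, rather than with an extra spurious factor — but it is the same renewal computation already used in the half-line setting of \cite{Kreulichevo}, so I would cite it if the explicit resummation is too long.

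For \eqref{iterate_ineq_2}, I would run the identical argument but replace pointwise values by $\sup$ over $I(\cdot)=(-\infty,\cdot]$. The key monotonicity observation is that $\sup_{x\in I(t-\tau)}\nrm{\psi(x)-\phi(x)}$ is nondecreasing in $t$ for fixed $\tau$ and that $\nrm{\psi_t-\phi_t}_E=\sup_{s\in\ji}\nrm{\psi(t+s)-\phi(t+s)}\le \sup_{x\in I(t)}\nrm{\psi(x)-\phi(x)}$ since $\ji\subset(-\infty,0]$; similarly $\sup_{x\le t}M(x)$ satisfies the same integral inequality with $m$ replaced by $\sup_{x\le\cdot}\nrm{\psi(x)-\phi(x)}$ because the convolution kernel is supported on $\tau\ge0$ and hence only looks into the past. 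Applying the same resummation then gives \eqref{iterate_ineq_2} verbatim. The only care needed is that all the integrals converge, which is guaranteed since $\om<0$ forces $\tfrac{\om}{1-\la\om}<0$ for $0<\la<\tfrac1{-\om}$, and $\psi,\phi\in Y=BUC(\re,X)$ makes $m$ bounded.
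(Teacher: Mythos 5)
Your proposal is correct and follows essentially the same route as the paper: the same splitting of $J^{\om}_{\la}(t,\psi_t)-J^{\om}_{\la}(t,\phi_t)$ into a history-change piece (Remark \ref{history_control}) and a contraction piece, leading to the same renewal inequality, which the paper then closes by invoking Proposition \ref{integral-ineq} rather than resumming the convolution powers by hand as you do. The passage to \eqref{iterate_ineq_2} via $\nrE{\psi_t}\le\sup_{x\in I(t)}\nrm{\psi(x)}$ and monotonicity of $t\mapsto\sup_{x\in I(t)}$ is also exactly the paper's argument.
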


\begin{proof}[Proof of Lemma \ref{ul-fixpoint}]
We restrict the proof to Assumption \ref{IVFDE_2_re}.
 For the starting element $\psi,$ due to Proposition \ref{T-la-defined} the mapping $T_\la$ is well defined. For given $t\in\re$ we find,
\begin{eqnarray*}
\lefteqn{\nrm{T_{\la}\psi(t)-T_{\la}\phi(t)} =\nrm{\up{\la}{\psi}(t)-\up{\la}{\phi}(t)}}\\
&\le & \nrm{ \Jlw(t,\psi_t)\fk{\frac{1}{\la}\int_0^{\infty}\nres{\tau}\up{\la}{\psi}(t-\tau)d\tau}-  \Jlw(t,\phi_t)\fk{\frac{1}{\la}\int_0^{\infty}\nres{\tau}\up{\la}{\phi}(t-\tau)d\tau} } \\
&\le&\nrm{\Jlw(t,\psi_t)\fk{\frac{1}{\la}\int_0^{\infty}\nres{\tau}\up{\la}{\psi}(t-\tau)d\tau} - \Jlw(t,\phi_t)\fk{\frac{1}{\la}\int_0^{\infty}\nres{\tau}\up{\la}{\psi}(t-\tau)d\tau} } \\
&&+ \nrm{\Jlw(t,\phi_t)\fk{\frac{1}{\la}\int_0^{\infty}\nres{\tau}\up{\la}{\psi}(t-\tau)d\tau} 
- \Jlw(t,\phi_t)\fk{\frac{1}{\la}\int_0^{\infty}\nres{\tau}\up{\la}{\phi}(t-\tau)d\tau}}\\
&\le& \frac{1}{\la(1-\la\om)}\int_0^{\infty}\nres{\tau}
   \nrm{\up{\la}{\psi}(t-\tau)-\up{\la}{\phi}(t-\tau)}d\tau \\
&& + \frac{\la K_0}{1-\la\om} \nrm{\psi_t-\phi_t}_E.\\
\end{eqnarray*}
By  Proposition \ref{integral-ineq} we obtain for all $t\in \re$
\begin{eqnarray}  \label{t_greater_0}
\lefteqn{\nrm{\up{\la}{\psi}(t)-\up{\la}{\phi}(t)} } \nonumber \\
&\le& \frac{\la K_0}{1-\la\om}\nrm{\psi_t-\phi_t}_E 
+\frac{K_0}{1-\la\om}\int_0^{\infty}\exp\fk{\frac{\om}{1-\la\om}\tau}\nrm{\psi_{t-\tau}-\phi_{t-\tau}}_E d\tau.
\end{eqnarray}
As 
$$ 
\nrE{\psi_t} \le \sup_{x\in I(t)}\nrm{\psi(x)},
$$
we find
\begin{eqnarray}  \label{t_greater_0}
\lefteqn{\nrm{\up{\la}{\psi}(t)-\up{\la}{\phi}(t)} } \nonumber \\
&\le&  \frac{\la K_0}{1-\la\om}\sup_{x\in I(t)}\nrm{\psi(x)-\phi(x)}  \\
&&+ \frac{K_0}{1-\la\om}\int_0^{\infty}\exp\fk{\frac{\om}{1-\la\om}\tau}\sup_{x\in I(t-\tau)}        \nrm{\psi(x)-\phi(x)} d\tau. \nonumber
\end{eqnarray}
Using that $\bk{t\mapsto\sup_{x\in I(t)}\nrm{\psi(x)}}$ is non-decreasing the proof is finished.

\end{proof}

\begin{lem} \label{induction_step_1}
Let Assumption \ref{IVFDE_1_re} and either Assumption \ref{IVFDE_2_og_re} with $\om <0$ or Assumption \ref{IVFDE_2_re} with $L_g<-\om$ hold. Further, let $\bk{\psi_{\la}}_{\la>0}\subset Y$ and $\psi\in Y.$ 
If
$$
\lim_{\la\to 0} \psi_{\la}=\psi, \mbox{ in } Y,
$$
$\ul\in Y$ the solution to
\begin{equation} 
\begin{array}{rcll} 
\fk{\partial_t}_{\la} \ul(t) &\in & A(t,(\psi_{\la})_t)\ul(t)+\om\ul(t)&:t\in\re
\end{array}
\end{equation}
and $v_{\la}\in Y$ the solution to 
\begin{equation} 
\begin{array}{rcll} 
\fk{\partial_t}_{\la} v_{\la}(t) &\in & A(t,(\psi)_t)v_{\la}(t)+\om v_{\la}(t)&:t\in\re
\end{array}
\end{equation}
then 
$$
\lim_{\la\to 0}\nrm{\ul-v_{\la}}_Y=0.
$$
\end{lem}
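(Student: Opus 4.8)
The plan is to exploit that both $\ul$ and $v_{\la}$ are fixpoints of contraction operators which differ \emph{only} in the history slot of the resolvent, and then to close a self-improving integral inequality for $\nrm{\ul(t)-v_{\la}(t)}$. Concretely, by Proposition \ref{T-la-defined} (applied with starting elements $\psi_{\la}$ and $\psi$ respectively) the functions $\ul$ and $v_{\la}$ are the unique fixpoints in $Y$ of
$$
u\mapsto \Jlw(t,(\psi_{\la})_t)\fk{\dl\intO\nres{\tau}u(t-\tau)d\tau},\qquad u\mapsto \Jlw(t,\psi_t)\fk{\dl\intO\nres{\tau}u(t-\tau)d\tau},
$$
and each map $x\mapsto \Jlw(t,\vp)x$ is a strict contraction with constant $q=\frac{1}{1-\la\om}$.

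The key step is to fix $t\in\re$, write $z_t:=\dl\intO\nres{\tau}\ul(t-\tau)d\tau$, and split
$$
\nrm{\ul(t)-v_{\la}(t)}\le \nrm{\Jlw(t,(\psi_{\la})_t)z_t-\Jlw(t,\psi_t)z_t}+\nrm{\Jlw(t,\psi_t)z_t-v_{\la}(t)}.
$$
The first term is handled by Remark \ref{history_control} (the inner argument $z_t$ is identical on both sides), giving the bound $\frac{K\la}{1-\la\om}\nrE{(\psi_{\la})_t-\psi_t}\le \frac{K\la}{1-\la\om}\nrm{\psi_{\la}-\psi}_Y$, where one uses $\nrE{(\psi_{\la})_t-\psi_t}\le \sup_{s\in\ji}\nrm{\psi_{\la}(t+s)-\psi(t+s)}\le\nrm{\psi_{\la}-\psi}_Y$. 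The second term, since $v_{\la}$ is the fixpoint of the second operator, equals $\nrm{\Jlw(t,\psi_t)z_t-\Jlw(t,\psi_t)z_t^{v}}$ with $z_t^{v}:=\dl\intO\nres{\tau}v_{\la}(t-\tau)d\tau$, and the contraction estimate bounds it by $\frac{1}{\la(1-\la\om)}\intO\nres{\tau}\nrm{\ul(t-\tau)-v_{\la}(t-\tau)}d\tau$.

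To conclude, note $c_{\la}:=\nrm{\ul-v_{\la}}_Y<\infty$ because $\ul,v_{\la}\in Y=BUC(\re,X)$; taking the supremum over $t\in\re$ in the resulting inequality and using $\dl\intO\nres{\tau}d\tau=1$ (or, equivalently, Proposition \ref{integral-ineq}) gives $c_{\la}\le \frac{K\la}{1-\la\om}\nrm{\psi_{\la}-\psi}_Y+\frac{1}{1-\la\om}c_{\la}$, whence, because $\om<0$, $c_{\la}\le \frac{K}{-\om}\nrm{\psi_{\la}-\psi}_Y$. Letting $\la\to 0$ and using $\lim_{\la\to 0}\nrm{\psi_{\la}-\psi}_Y=0$ finishes the argument. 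The case of Assumption \ref{IVFDE_2_og_re} is entirely parallel, replacing the reference to Assumption \ref{IVFDE_2_re} throughout.

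The main obstacle I anticipate is verifying that the constant $K$ coming from Remark \ref{history_control} can be chosen \emph{uniformly in $\la$}: Remark \ref{D-vp_eq_D} only provides such a bound on $\btr{A(t,\vp)x}$ for $x$ ranging over a fixed bounded subset of $\hat D$, so one must first know that the values $\Jlw(t,(\psi_{\la})_t)z_t$ (equivalently, the approximants $\ul$) lie in a $\la$-independent bounded set. When $\psi_{\la}$ is Lipschitz this is Lemma \ref{iteration-possible_whole-line}(a), but the present hypotheses do not assume $\psi_{\la}$ Lipschitz; so the uniform boundedness of $\bk{\ul}_{\la>0}$ (and of $\bk{v_{\la}}_{\la>0}$) must be obtained separately — either from the standard resolvent estimate using $\om<0$ (which makes $A(t,\vp)+\om I$ suitably dissipative so the fixpoint stays bounded by a multiple of $\nrm{\psi_{\la}}_Y$), or by approximating $\psi_{\la}$ by Lipschitz functions and passing to the limit via the contraction estimate already established. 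Once this uniform bound is in place, the rest is the routine estimate above.
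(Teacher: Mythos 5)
Your argument is essentially the paper's: the paper's proof simply invokes inequality (\ref{iterate_ineq_2}) of Lemma \ref{ul-fixpoint} with $\phi=\psi_{\la}$ and takes $\sup_{t\in\re}$, and that inequality is proved by exactly your resolvent splitting (history perturbation via Remark \ref{history_control} plus the contraction in the integral argument), the only cosmetic difference being that the paper solves the resulting integral inequality via Proposition \ref{integral-ineq} while you absorb the term $\frac{1}{1-\la\om}c_{\la}$ directly using $\om<0$. The obstacle you anticipate at the end is not one: the constant $K$ in Remark \ref{history_control} is just $K_0$ from Assumption \ref{IVFDE_2_og_re} or \ref{IVFDE_2_re} (apply the perturbed control inequality with $t_1=t_2$ and $x_i=J_{\la}^{\om}(t,\vp_i)z$, so that $x_1-x_2-\la(y_1-y_2)=0$ and all terms involving $h,g,k$ or the size of $x_2,y_2$ drop out), hence it is uniform in $\la$ and requires no a priori bound on the approximants.
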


\begin{proof}[Proof of Lemma \ref{induction_step_1}]
Apply inequality (\ref{iterate_ineq_2}) from Lemma \ref{ul-fixpoint}  with $\phi=\psi_{\la}$ and $\psi=\psi,$ and compute the $\sup_{t\in\re}.$
\end{proof}

\begin{cor} \label{induction_step_2_halfline}
Under the conditons of the previous lemma, $\bk{v_{\la}}_{\la>0},$ and $\bk{\ul}_{\la>0}$ are Cauchy in Y for $\la\to 0.$
\end{cor}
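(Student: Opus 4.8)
The plan is to reduce the statement to the convergence of $T_{\la}\psi$ as $\la\to0$. First I would record, from the uniqueness of the fixpoint of the strict contraction in Proposition~\ref{T-la-defined} together with Lemma~\ref{ul-fixpoint}, that the solution $v_{\la}$ of the equation with history $(\psi)_t$ equals $T_{\la}\psi$, and likewise $\ul=T_{\la}\psi_{\la}$. Consequently it is enough to show that $\bk{T_{\la}\psi}_{\la>0}$ is Cauchy in $Y$: from
$$
\nrm{\ul-u_{\mu}}_Y\le\nrm{\ul-v_{\la}}_Y+\nrm{v_{\la}-v_{\mu}}_Y+\nrm{v_{\mu}-u_{\mu}}_Y,
$$
the two outer terms tend to $0$ as $\la,\mu\to0$ by Lemma~\ref{induction_step_1}, and the middle term $\nrm{v_{\la}-v_{\mu}}_Y=\nrm{T_{\la}\psi-T_{\mu}\psi}_Y$ is controlled by the Cauchy property of $\bk{T_{\la}\psi}_{\la>0}$; an $\ep/3$ argument then gives the claim for $\bk{\ul}$, and for $\bk{v_{\la}}=\bk{T_{\la}\psi}$ it is then already proved.

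To see that $\bk{T_{\la}\psi}_{\la>0}$ is Cauchy I would invoke Lemma~\ref{iteration-possible_whole-line}(e): when the starting function is Lipschitz, $T_{\la}\psi$ converges in $Y$ to the solution $u_1$ of the first recursion step, hence is Cauchy. If $\psi\in Y=BUC(\re,X)$ is not itself Lipschitz, I would argue by density: given $\ep>0$, pick a Lipschitz $\psi^{\ep}\in Y$ with $\nrm{\psi-\psi^{\ep}}_Y\le\ep$ (which is an admissible starting function by Proposition~\ref{T-la-defined}), and take the supremum over $t\in\re$ in the stability estimate~(\ref{iterate_ineq_2}). Using $\frac{1}{1-\la\om}\int_0^{\infty}\exp\fk{\frac{\om}{1-\la\om}\tau}\,d\tau=\frac{1}{-\om}$ this yields
$$
\nrm{T_{\la}\psi-T_{\la}\psi^{\ep}}_Y\le c_{\la}\,\nrm{\psi-\psi^{\ep}}_Y,\qquad c_{\la}:=\frac{\la K_0}{1-\la\om}+\frac{K_0}{-\om},
$$
and since $\om<0$ the constant $c_{\la}$ is bounded, say by $c:=1+\frac{K_0}{-\om}$, for all sufficiently small $\la$. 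Combining this with the triangle inequality
$$
\nrm{T_{\la}\psi-T_{\mu}\psi}_Y\le\nrm{T_{\la}\psi-T_{\la}\psi^{\ep}}_Y+\nrm{T_{\la}\psi^{\ep}-T_{\mu}\psi^{\ep}}_Y+\nrm{T_{\mu}\psi^{\ep}-T_{\mu}\psi}_Y
$$
and the convergence (hence Cauchy property) of $\bk{T_{\la}\psi^{\ep}}_{\la>0}$ from the Lipschitz case, one obtains $\limsup_{\la,\mu\to0}\nrm{T_{\la}\psi-T_{\mu}\psi}_Y\le2c\ep$; letting $\ep\to0$ shows that $\bk{T_{\la}\psi}_{\la>0}$ is Cauchy.

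The step I expect to be the main obstacle is exactly the non-Lipschitz case: one must verify that the stability constant in~(\ref{iterate_ineq_2}) remains bounded uniformly as $\la\to0$ --- which is the role of the elementary identity above, valid because $\om<0$ forces $\frac{1}{1-\la\om}\le1$ --- and that the Lipschitz approximants are legitimate inputs to $T_{\la}$ under the sole hypotheses in force (only $\om<0$, respectively $L_g<-\om$, is assumed in Lemma~\ref{induction_step_1}). In the application to Recursion~\ref{recursion} the relevant starting function is already equi-Lipschitz, so this step can be skipped and the Corollary follows at once from Lemma~\ref{iteration-possible_whole-line}(e) and Lemma~\ref{induction_step_1}.
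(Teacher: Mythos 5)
Your argument is correct and follows essentially the same route as the paper: approximate $\psi$ by a Lipschitz $\psi^{\ep}$ (density via mollification), control $\nrm{T_{\la}\psi-T_{\la}\psi^{\ep}}_Y$ uniformly in $\la$ through the stability estimate (\ref{iterate_ineq})/(\ref{iterate_ineq_2}), and combine with the convergence of $\bk{T_{\la}\psi^{\ep}}_{\la>0}$ from Lemma \ref{iteration-possible_whole-line}(e) by a triangle-inequality/$\ep$-argument. Your explicit reduction of the Cauchy property of $\bk{\ul}$ to that of $\bk{v_{\la}}$ via Lemma \ref{induction_step_1}, and your explicit bound $c_{\la}\le 1+\frac{K_0}{-\om}$, merely spell out steps the paper leaves implicit.
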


\begin{proof}[Proof of Corollary \ref{induction_step_2_halfline}]
Using a uniform continuous extention of $\psi$ on $\re$, and a mollifier we find that the Lipschitz functions on $\ji\cup\jz$ are dense in $Y.$ Consider $\psi^{\ep}$ Lipschitz and the equation 
for 
\begin{equation} 
\begin{array}{rcll} 
\dtl v_{\la}^{\ep}(t) &\in & A(t,(\psi^{\ep})_t)v_{\la}^{\ep}(t)+\om v_{\la}^{\ep}(t)&:t\in \re\\
\end{array}
\end{equation}

Applying Lemma \ref{iteration-possible_whole-line} we find $\bk{v_{\la}^{\ep}}_{\la>0}$ is convergent for every $\psi^{\ep}$ arbitrary close to $\psi.$  
The use of (\ref{iterate_ineq}) from Lemma \ref{ul-fixpoint} and the triangle inequality gives
\begin{eqnarray*}
\nrm{v_{\la}(t)-v_{\mu}(t)} &\le & 
\nrm{v_{\la}(t)-v_{\la}^{\ep}(t)} + \nrm{v_{\la}^{\ep}(t)-v_{\mu}^{\ep}(t)}+\nrm{v_{\mu}^{\ep}(t)-v_{\mu}(t)} \\
&\le& \frac{\la K_0}{1-\la\om}\nrm{\psi_t-\psi^{\ep}_t}_E 
+\frac{K_0}{1-\la\om}\int_0^{\infty}\exp\fk{\frac{\om}{1-\la\om}\tau}\nrm{\psi_{t-\tau}-\psi^{\ep}_{t-\tau}}_E d\tau \\
&& + \nrm{v_{\la}^{\ep}(t)-v_{\mu}^{\ep}(t)} \\
&& +\frac{\mu K_0}{1-\mu\om}\nrm{\psi_t-\psi^{\ep}_t}_E +\frac{K_0}{1-\mu\om}\int_0^{\infty}\exp\fk{\frac{\om}{1-\mu\om}\tau}\nrm{\psi_{t-\tau}-\psi^{\ep}_{t-\tau}}_E d\tau. \\
\end{eqnarray*}
Thus it remains compute the $\sup_{t\in \re}$ on both sides of the inequality.
\end{proof}

\begin{lem} \label{iteration_convergent_halfline}
Let Assumption \ref{IVFDE_1_re} and either Assumption \ref{IVFDE_2_og_re} with $\om <0$ or Assumption \ref{IVFDE_2_re} with $L_g<-\om$ hold. Further, let  $\psi\in Y.$ 
If $u_{1,\la}$ is the solution to (\ref{recursion_start}) 
then 
$$
\lim_{\la \to 0}u_{1,\la}=u_1\in Y.
$$
If $\bk{u_{n,\la}}_{\la>0} \subset Y$ is the solution to the n-th step with 
$$\lim_{\la\to 0}u_{n,\la}=u_n \in Y,$$
and $u_{n+1,\la}$ is the solution to (\ref{recursion_step}),
then 
$$
\lim_{\la \to 0}u_{n+1,\la}=u_{n+1}\in Y.
$$
\end{lem}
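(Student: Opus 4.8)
The plan is to argue by induction on $n$, reducing both the base case and the induction step to Corollary \ref{induction_step_2_halfline} (which itself rests on Lemma \ref{ul-fixpoint}, Lemma \ref{iteration-possible_whole-line} and Lemma \ref{induction_step_1}). The organising remark is that the recursion step $n\to n+1$ feeds the previously produced family $u_{n,\la}$ into the history slot of the operator, so the convergence $u_{n,\la}\to u_n$ granted by the induction hypothesis is exactly the hypothesis ``$\psi_\la\to\psi$ in $Y$'' on which Lemma \ref{induction_step_1} and Corollary \ref{induction_step_2_halfline} are built.

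For the base case $n=1$ I would observe that $u_{1,\la}=T_\la\psi$ solves (\ref{recursion_start}), i.e.\ it is the approximant associated with the \emph{fixed} history $(\psi)_t$. Taking the constant sequence $\psi_\la:=\psi$, the families ``$u_\la$'' and ``$v_\la$'' of Lemma \ref{induction_step_1} both coincide with $u_{1,\la}$, and Corollary \ref{induction_step_2_halfline} asserts that $\{u_{1,\la}\}_{\la>0}$ is Cauchy in $Y=BUC(\re,X)$. Since $Y$ is complete, $u_{1,\la}\to u_1$ for some $u_1\in Y$. (Concretely, this is where the density of the Lipschitz functions on $\ji\cup\jz$ in $Y$, the convergence of the regularised family $\{v_\la^{\ep}\}$ supplied by Lemma \ref{iteration-possible_whole-line}(e), and the estimate (\ref{iterate_ineq}) used to absorb $\|v_\la-v_\la^{\ep}\|_Y$ uniformly in $\la$ enter; but all of that is internal to the Corollary and need not be redone.)

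For the induction step, assume $u_{n,\la}\to u_n$ in $Y$. I would first note that each $u_{n+1,\la}$ is well defined and lies in $Y$: since $u_{n,\la}\in BUC(\re,X)$ we have $(u_{n,\la})_t\in E$ for every $t\in\re$, so Proposition \ref{T-la-defined}, applied with the history $(u_{n,\la})_t$, produces the fixpoint $u_{n+1,\la}\in Y$ solving (\ref{recursion_step}). Then I would apply Lemma \ref{induction_step_1} and Corollary \ref{induction_step_2_halfline} with $\psi_\la:=u_{n,\la}$ and $\psi:=u_n$: the induction hypothesis furnishes $\psi_\la\to\psi$ in $Y$, the solution with history $(\psi_\la)_t=(u_{n,\la})_t$ is by definition $u_{n+1,\la}$, so $\{u_{n+1,\la}\}_{\la>0}$ is Cauchy in $Y$ and hence converges to some $u_{n+1}\in Y$. (If desired, Lemma \ref{induction_step_1} additionally shows $\|u_{n+1,\la}-v_\la\|_Y\to 0$, where $v_\la$ is the solution carrying the fixed history $(u_n)_t$, which makes explicit that moving the history along with $\la$ does not affect the limit.) This closes the induction.

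The main obstacle I anticipate is organisational rather than analytic: one must make the identification of $u_{n+1,\la}$ with the object ``$u_\la$'' of Lemma \ref{induction_step_1} watertight once the history is allowed to vary with $\la$, and confirm that the induction hypothesis delivers convergence in precisely the topology that (\ref{iterate_ineq_2}) propagates --- the sup-norm of $Y$, carried through the history by the non-decreasing majorant $t\mapsto\sup_{x\in I(t)}\|\psi(x)\|$. Once that bookkeeping is secured, no fresh estimates are required: the contraction constant $q=(1-\la\om)^{-1}$, the exponential-kernel bounds, and the density/mollifier argument have already been discharged in Proposition \ref{T-la-defined}, Lemma \ref{ul-fixpoint}, Lemma \ref{iteration-possible_whole-line} and Corollary \ref{induction_step_2_halfline}.
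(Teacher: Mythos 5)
Your proof is correct and follows essentially the same route as the paper: the paper's own proof is precisely ``apply Lemma \ref{iteration-possible_whole-line} and Corollary \ref{induction_step_2_halfline} for the base case, and Lemma \ref{induction_step_1} together with Corollary \ref{induction_step_2_halfline} for the step $n\to n+1$,'' with your identification $\psi_\la:=u_{n,\la}$, $\psi:=u_n$ being exactly the intended substitution. Your added bookkeeping (well-definedness of $u_{n+1,\la}$ via Proposition \ref{T-la-defined} and completeness of $Y$) only makes explicit what the paper leaves implicit.
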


\begin{proof}[Proof of Lemma \ref{iteration_convergent_halfline}] 
Apply Lemma \ref{iteration-possible_whole-line} and Corollary \ref{induction_step_2_halfline} to the start of the induction, the induction step follows by Lemma \ref{induction_step_1} and Corollary \ref{induction_step_2_halfline}.

\end{proof}

\begin{proof}[Proof of Theorem \ref{halfline-uniform-convergent}]
Let $t\in\re$, and $n\ge 1$ we have by 
Lemma \ref{ul-fixpoint} and inequality (\ref{iterate_ineq_2}), that
\begin{eqnarray*} 
\lefteqn{\sup_{x\in I(t)}\nrm{T^{n+1}_{\la}\psi(x) -T^{n}_{\la}\phi(x)}}\nonumber \\
&\le&  \frac{\la K_0}{1-\la\om}\sup_{x\in I(t)}\nrm{T^n_{\la}\psi(x)-T^{n-1}_{\la}\phi(x)} \nonumber \\
&&+ \frac{K_0}{1-\la\om}\int_0^{\infty}\exp\fk{\frac{\om}{1-\la\om}\tau}\sup_{x\in I(t-\tau)}        \nrm{T^n_{\la}\psi(x)-T^{n-1}_{\la}\phi(x)} d\tau
\end{eqnarray*}
As we may pass to $\la\to 0$, we obtain,
\begin{eqnarray*}
\sup_{x\in I(t)}\nrm{u_{n+1}(x)-u_n(x)} &\le&K_0 \int_0^{\infty}\exp\fk{\om \tau}\sup_{x\in I(t-\tau)}\nrm{u_n(x)-u_{n-1}(x)}d\tau \\
&\le& \frac{K_0}{\btr{\om}}\sup_{x\in I(t)}\nrm{u_n(x)-u_{n-1}(x)}.
\end{eqnarray*}
Consequently, when computing the $\sup_{t\in\re}$ we have
$$
\nrm{u_{n+1}-u_n}_Y \le \fk{\frac{K}{-\om}}^n \nrm{u_1-\psi}_Y
$$
and $\bk{u_n}_{n\in\za}$ becomes Cauchy in $Y.$  Let $\psi,\phi$ starting points of the iteration, then for 
$$
\Funk{S_{\la}}{BUC(\re,X)}{BUC(\re,X)}{f}{\bk{t\mapsto \frac{K_0}{1-\la\om}\int_0^{\infty}\exp\fk{\frac{\om\tau}{1-\la\om}}f(t-\tau)d\tau},}
$$
$I(t)=(-\infty,t],$ and 
$$
f^{\la}_n(t):=\sup_{x\in I(t)}\nrm{T^n_{\la}\psi(x)-T^n_{\la}\phi(x)} \ n\in\za_0
$$
by Lemma \ref{ul-fixpoint} we have,
\begin{eqnarray*}
f_{n+1}^{\la}(t)&\le&\frac{K_0\la}{1-\la\om}f_n^{\la}(t)+\fk{S_{\la}f_n^{\la}}(t).
\end{eqnarray*}

Iterating the inequality gives
\begin{eqnarray*}
\nri{f_{n+1}^{\la}} &\le& \nrm{\fk{\frac{K_0\la}{1-\la\om}I+S_{\la}}^{n+1}}\nri{\psi-\phi} \\
& \le & \fk{\frac{K_0\la}{1-\la\om}+\frac{K_0}{-\om}}^{n+1} \nri{\psi-\phi}\\
\end{eqnarray*}
we have for $\frac{K_0}{-\om} <1 $ and small $\la,$ that 
$
\fk{\frac{K_0\la}{1-\la\om}+\frac{K_0}{-\om}}^{n}$ is summable, therefore a null sequence, which proves the independence of the starting point, when passing to $\la\to 0$ and $n\to\infty.$

It remains to  prove that $u$ is an integral solution. We start with Assumption \ref{IVFDE_2_re}. For given $n\in\za$ we have for $q$ small that $\bk{T_{\la}^n\psi}_{0<\la\le q}$ is equi-Lipschitz by Lemma \ref{iteration-possible_whole-line}. Consequently, $u_n$ is Lipschitz as well.
Hence, with the modified control functions
$$
\Funk{\tilde{h}}{[0,T]}{(X\times X \times E,\nrm{\cdot}_1)}{t}{(h^{\om}(t),k(t),(u_n)_t)}
$$
and 
$$
L(\nrm{x}):=L_1^{\om}(\nrm{x})+L_2(\nri{u_n})+K_0,
$$ 
we obtain, that $B_n(t):=A(t,u^n_t)$ is an operator satisfying the Assumption 2.1, and 2.3 in \cite{Kreulichevo}. As the approximation 
\begin{equation}
\dtl v^n_{\la}(t)\in B_n(t)v^n_{\la}(t) +\om v^n_{\la}(t), t\in \re 
\end{equation}
tends to the integral solution \cite[2.11(2)]{Kreulichevo}, when passing to $\la\to 0.$.  Lemma \ref{induction_step_1} gives $u_n=v_n,$ and we conclude
\begin{eqnarray*}
\lefteqn{\nrm{u_n(t)-x_n}-\nrm{u_n(r)-x_n}\le \int_r^t\fk{[y_n,u_n(\nu)-x_n]_{+}+\om\nrm{u_n(\nu)-x_n}}d\nu} \\
&&+L(\nrm{x_n})\int_r^t\nrm{\tilde{h}(\nu)-\tilde{h}(r)}d\nu +\nrm{y_n}\int_r^t\nrm{g(\nu)-g(r)}d\nu
\end{eqnarray*}
for all $-\infty < r\le t <\infty ,$ and $[x_n,y_n]\in B_n(r)+\om I.$
Lemma \ref{history_control}, Assumption \ref{IVFDE_2_re} together with \cite[Theorem 10.5.]{Ito_Kappel} and \cite[(10.6)]{Ito_Kappel} lead, for given $[x,y]\in A(t,u_t)$ to $[x_n,y_n]\in A(t,(u_n)_t)$  such that $\lim_{n\to\infty}[x_n,y_n]=[x,y].$ Passing to $n\to \infty$ we showed that $u$ is an integral soltion in the case of Assumption \ref{IVFDE_2_re} with the control function $\tilde{h}, L,$ and $g.$ Note that similar arguments apply with uniform continuity in the case of Assumption \ref{IVFDE_2_og_re}, with \cite[Thm. 2.11 (1)]{Kreulichevo} instead of \cite[2.11(2)]{Kreulichevo}.
\end{proof}

\begin{proof}[Proof of Theorem \ref{theo-lipschitz-continuity}]
We restrict to the case of Assumption \ref{IVFDE_2_re}. and by the recursion \ref{recursion}, we have 
$$
\dtl T_{\la}\psi(t)\in A(t,\psi_t)\Tlp(t)+\om T_{\la}\psi(t).
$$
This leads for $t:=t-s$ and $t=t$ by the Assumption \ref{IVFDE_2_re} to:
\begin{eqnarray*}
\lefteqn{\nrm{\Tlp(t-s)-\Tlp(t)} \le \nrm{\Tlp(t-s)-\Tlp(t)-\la\fk{\dtl\Tlp(t-s)-\dtl\Tlp(t)}}} \\
&&+\frac{\la}{(1-\la\om)} \fk{ \nrm{h(t_1)-h(t_2)}L_1(\nrm{\Tlp(t)}) + \nrm{g(t_1)-g(t_2)}\nrm{\dtl\Tlp(t)}} \\
&&+ \frac{\la}{1-\la\om}\fk{\nrm{k(t_1)-k(t_2)}L_2(\nrE{\psi_t})+K_0\nrE{\psi_{t-s}-\psi_t} } \\
&\le& \frac{1}{\la(1-\la\om)}\int_0^{\infty}\exp\fk{\frac{-\tau}{\la}}\nrm{\Tlp(t-s-\tau)-\Tlp(t-\tau)}d\tau\\
&&+\frac{\la}{1-\la\om} \fk{ \nrm{h(t_1)-h(t_2)}L_1(\nrm{\Tlp(t)}) + \nrm{g(t_1)-g(t_2)}\nrm{\dtl\Tlp(t)}} \\
&&+ \frac{\la}{1-\la\om}\fk{\nrm{k(t_1)-k(t_2)}L_2(\nrE{\psi_t})+K_0\nrE{\psi_{t-s}-\psi_t} }
\end{eqnarray*}
Defining $I(t):=(-\infty,t]$
\begin{eqnarray*}
K_{\la,n}(t)&:=&\sup_{s>0}\frac{1}{s}\nrm{\Tlpn(t-s)-\Tlpn(t)} \\
K_{\la,n}^{\sup}(t)&:=&\sup_{r \in I(t)}K_{\la,n}(r) \\
S_{\la,n}(t)&=& \sup_{s>0}\frac{1}{s}\nrE{\Tlpn(t-s)-\Tlpn(t)}
\end{eqnarray*}
we have
$$
S_{\la,n}(t) \le K_{\la,n}^{\sup}(t),
$$
Additionally
$$
\nrm{\dtl\Tlpn(t)}\le K_{ \la,n}(t).
$$
This gives with boundedness of $\Tlpn$, the Lipschitz assumptions on $h, g,k,$ and some adequate contant K,
\begin{eqnarray*}
K_{\la,1}(t)&\le& \frac{1}{\la(1-\la\om)}\int_0^{\infty}\exp\fk{\frac{-\tau}{\la}}K_{\la,1}(\tau)d\tau \\
&&+\frac{\la}{1-\la\om}K\fk{L_h+L_k} +L_g K_{\la,1}(t) +\frac{K_0\la}{1-\la\om} S_{\la,0}(t). \\
\end{eqnarray*}
Consequently,
\begin{eqnarray*}
K_{\la,1}(t)
&\le& \frac{1}{\la(1-\la(\om+L_g))}\int_0^{\infty}\exp\fk{\frac{-\tau}{\la}}K_{\la,1}^{\sup}(\tau)d\tau \\
&&+\frac{\la}{1-\la(\om+L_g)}K\fk{L_h+L_k} +\frac{K_0\la}{1-\la(\om+L_g)} K_{\la,0}^{\sup}(t)
\end{eqnarray*}
As the right hand side is monotone increasing in $t$ we find
\begin{eqnarray*}
K_{\la,1}^{\sup}(t)
&\le& \frac{1}{\la(1-\la(\om+L_g))}\int_0^{\infty}\exp\fk{\frac{-\tau}{\la}}K_{\la,1}^{\sup}(\tau)d\tau \\
&&+\frac{\la}{1-\la(\om+L_g)}K\fk{L_h+L_k} +\frac{K_0\la}{1-\la(\om+L_g)} K_{\la,0}^{\sup}(t)
\end{eqnarray*}
Applying Proposition \ref{integral-ineq} we conclude with some adequate constant $K_2,$
\begin{eqnarray*}
\lefteqn{K_{\la,1}^{\sup}(t)} \\
&\le& 
\frac{\la}{1-\la(\om+L_g)}K\fk{L_h+L_k} +\frac{K_0\la}{1-\la(\om+L_g)} K_{\la,0}^{\sup}(t)\\
&&+\frac{1}{\la(1-\la(\om+L_g))} \intO \exp\fk{\frac{\om+L_g}{1-\la(\om+\L_g)}\tau}\frac{\la}{1-\la(\om+L_g)}K\fk{L_h+L_k}d\tau \\
&& +\frac{K_0}{(1-\la(\om+L_g))^2} \intO \exp\fk{\frac{\om+L_g}{1-\la(\om+\L_g)}\tau} K_{\la,0}^{\sup}(t-\tau)d\tau \\
&\le& K_2+\la K_2 K_{\la,0}^{\sup}(t))+ \frac{K_0}{(1-\la(\om+L_g))^2}\intO \exp\fk{\frac{\om+L_g}{1-\la(\om+\L_g)}\tau} K_{\la,0}^{\sup}(t-\tau)d\tau. \\
\end{eqnarray*}
By repeating the steps for every $n$ we find
\begin{eqnarray*}
\lefteqn{K_{\la,n+1}^{\sup}(t)} \\
&\le& K_2+\la K_2 K_{\la,n}^{\sup}(t))+ \frac{K_0}{(1-\la(\om+L_g))^2}\intO \exp\fk{\frac{\om+L_g}{1-\la(\om+\L_g)}\tau} K_{\la,n}^{\sup}(t-\tau)d\tau, 
\end{eqnarray*}
Defining
$$
\Funk{S_{\la}}{BUC(\re,X)}{BUC(\re,X)}{f}{\bk{t\mapsto \frac{K_0}{(1-\la(\om+L_g))^2}\intO\exp\fk{\frac{(\om+L_g)\tau}{1-\la(\om+L_g)}}f(t-\tau)d\tau}}
$$
for $L_g+K_0<-\om$ and small $\la$ we have that the spectral radius 
$r\fk{K_2\la I+S_{\la}} \le q<1 $   which leads to the uniform boundedness of $K_{ \la,n}^{\sup}.$

\end{proof}

Thus we are in the situation to apply the uniform convergence of the approximation $u_{\la,n}.$

\begin{proof}[Proof of Theorem \ref{theo-bd-sol}]
The fixpoint mapping of Prop. \ref{T-la-defined} leaves 
$ Y=BUC(\re,X)$
invariant. Thus, we find $u$ as a uniform limit of bounded and uniformly continuous functions.
\end{proof}

\begin{proof}[Proof of Theorem \ref{theo-periodic-sol}]
As the fixpoint mapping of Prop. \ref{T-la-defined} leaves 
$$ Y=\bk{f\in BUC(\re,X): f(t+T)=f(t) \mbox{ for all } t\in \re } $$
invariant, we find $u$ as a uniform limit of $T$-periodic functions.
\end{proof}

\begin{proof}[Proof of Lemma \ref{ap-inv-lem}]
For $x,y\in X$ and $\vp,\phi\in E$ by the triangle inequality
\begin{eqnarray*}
\nrm{J_{\la}^{\om}(t,\vp)x-J_{\la}^{\om}(t,\phi)y}&\le&\frac{\la K_0}{1-\la\om}\nrE{\vp-\phi}+\nrm{x-y}.
\end{eqnarray*}
For given   $g\in AP(\re,X)$ we have $\bk{t\mapsto g_t}\in AP(\re,AP(\re,X)),$ which yields $\bk{t\mapsto g_t}\in AP(\re,E)).$ Hence \cite[Chapter VII,Lemma 4.1]{Dal-Krein} applies.
\end{proof}

\section{Appendix}
\begin{pro} \label{integral-ineq}
The solution to the integral equation 
\begin{equation} \label{infinite-integral-inequality}
u(t) = f(t) + \alpha\intO\exp(- \beta \tau) u(t-\tau) d\tau, 
\end{equation}
 for 
$ 0 < \alpha < \beta,  $ 
is given by 
$$ u(t) = (Rf)(t):= f(t) + \alpha \intO\exp(-(\beta-\alpha)\tau) f(t-\tau)d\tau. $$ 
Note that the resolvent is positive.
\end{pro}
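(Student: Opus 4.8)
The plan is to read (\ref{infinite-integral-inequality}) as a convolution equation on the line, $u = f + k\ast u$, with kernel $k(\tau)=\alpha\exp(-\beta\tau)$ for $\tau\ge 0$ and $k(\tau)=0$ for $\tau<0$. I would work in $C_b(\re,X)$ (equivalently in $Y=BUC(\re,X)$), where the integral $\intO\exp(-\beta\tau)u(t-\tau)\,d\tau$ converges absolutely because the kernel has finite mass $\alpha/\beta$, and where $Rf$ is well defined because $\intO\exp(-(\beta-\alpha)\tau)\,d\tau=1/(\beta-\alpha)<\infty$ thanks to $\alpha<\beta$.

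For existence I would introduce the auxiliary function $v(t):=\intO\exp(-\beta\tau)u(t-\tau)\,d\tau=\int_{-\infty}^{t}\exp(-\beta(t-s))u(s)\,ds$. If $u$ is bounded then $v$ is bounded and $C^{1}$ with $v'(t)=-\beta v(t)+u(t)$, while the equation states $u=f+\alpha v$; substituting yields the linear ODE $v'=-(\beta-\alpha)v+f$ on $\re$. Since $\beta-\alpha>0$, its unique bounded solution is $v(t)=\int_{-\infty}^{t}\exp(-(\beta-\alpha)(t-s))f(s)\,ds=\intO\exp(-(\beta-\alpha)\tau)f(t-\tau)\,d\tau$, hence $u=f+\alpha v=Rf$. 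Alternatively one may expand the Neumann series $\sum_{n\ge 0}k^{\ast n}$, compute by induction $k^{\ast n}(\tau)=\alpha^{n}\tau^{n-1}\exp(-\beta\tau)/(n-1)!$, and sum it to obtain the resolvent kernel $\alpha\exp(-(\beta-\alpha)\tau)$; or simply substitute $Rf$ back into (\ref{infinite-integral-inequality}) and verify the identity, the only nontrivial step being the change of variables $\rho=\tau+\sigma$ in the resulting double integral together with Fubini.

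For uniqueness I would note that the difference $w$ of two bounded solutions satisfies $w=k\ast w$, so $\|w\|_{\infty}\le(\alpha/\beta)\|w\|_{\infty}$ with $\alpha/\beta<1$, forcing $w\equiv 0$; thus $u=Rf$ is the solution. Positivity of the resolvent is then immediate from the explicit formula: the resolvent kernel $\tau\mapsto\alpha\exp(-(\beta-\alpha)\tau)$ is nonnegative, so $f\ge 0$ implies $Rf\ge f\ge 0$, which is exactly the asserted positivity of $R$.

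The only genuine obstacle is bookkeeping: justifying the interchange of integrals (Fubini, using $\alpha<\beta$ and boundedness of $f$) in whichever verification route one takes, and, in the ODE route, invoking that the bounded solution of $v'=-(\beta-\alpha)v+f$ on all of $\re$ is unique — which is again precisely where the hypothesis $\beta-\alpha>0$ enters.
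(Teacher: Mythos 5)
Your argument is correct. Note, however, that the paper states Proposition \ref{integral-ineq} in the appendix without any proof, so there is no argument of the author's to compare yours against; your write-up supplies the missing verification. Of the three routes you sketch, each works: the back-substitution route is the most self-contained (the change of variables $\rho=\tau+\sigma$ gives $\int_0^{\rho}e^{-\beta\tau}e^{-(\beta-\alpha)(\rho-\tau)}\,d\tau = e^{-(\beta-\alpha)\rho}\bigl(1-e^{-\alpha\rho}\bigr)/\alpha$, and the telescoping works out exactly as you indicate); the Neumann series route has the advantage of explaining \emph{where} the resolvent kernel $\alpha e^{-(\beta-\alpha)\tau}$ comes from, since $\sum_{n\ge 1}k^{\ast n}(\tau)=\alpha e^{-\beta\tau}\sum_{m\ge 0}(\alpha\tau)^m/m!=\alpha e^{-(\beta-\alpha)\tau}$; and the ODE route is the shortest but requires the (easy) uniqueness of bounded solutions of $v'=-(\beta-\alpha)v+f$ on all of $\re$. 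Your uniqueness argument via $\|w\|_{\infty}\le(\alpha/\beta)\|w\|_{\infty}$ is exactly the right one, and it is worth emphasizing your final observation on positivity: in the body of the paper the proposition is applied to \emph{inequalities} of the form $u\le f+k\ast u$ (e.g.\ to $K_{\la,1}^{\sup}$ in the proof of Theorem \ref{theo-lipschitz-continuity}), and the conclusion $u\le Rf$ there rests precisely on the nonnegativity of the resolvent kernel, which your explicit formula makes evident.
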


\end{document}